\newtheorem{theorem}{Theorem}
\newtheorem{lemma}{Lemma}
\newtheorem{exam}{Example}
\newtheorem{prop}{Proposition}
\newtheorem{rem}{Remark}
\newtheorem{assu}{Assumption}
\DeclareMathOperator*{\argmin}{arg\,min}
\newcommand{\pd}{\partial}
\newcommand{\CC}{\mathcal{C}}
\newcommand{\CN}{\mathcal{N}}
\newcommand{\CD}{\mathcal{D}}
\newcommand{\CE}{\mathcal{E}}
\newcommand{\BRnd}{\mathbb{R}^{nd}}
\newcommand{\BRdn}{\mathbb{R}^{dn}}
\newcommand{\QG}{\bar{\mathcal{Q}}_\mathcal{G}}
\newcommand{\QGactive}{\mathcal{Q}_\mathcal{G}}
\newcommand{\QiGactive}{\mathcal{Q}_\mathcal{G}^{i}}
\newcommand{\QjGactive}{\mathcal{Q}_\mathcal{G}^{j}}
\newcommand{\maxQG}{\mathcal{Q}_\mathcal{G}^\mathrm{max}}
\newcommand{\QiG}{\bar{\mathcal{Q}}^{i}_\mathcal{G}}
\newcommand{\CG}{\mathcal{G}}
\newcommand{\CL}{\mathcal{L}}
\newcommand{\Bx}{\mathbf{x}}
\newcommand{\Bxkp}{\mathbf{x}^{k+1}}
\newcommand{\Bu}{\mathbf{u}}
\newcommand{\Bv}{\mathbf{v}}
\newcommand{\Bukp}{\mathbf{u}^{k+1}}
\newcommand{\By}{\mathbf{y}}
\newcommand{\Bc}{\mathbf{c}}
\newcommand{\Bw}{\mathbf{w}}
\newcommand{\Bykp}{\mathbf{y}^{k+1}}
\newcommand{\BA}{\mathbf{A}}
\newcommand{\BB}{\mathbf{B}}
\newcommand{\BD}{\mathbf{D}}
\newcommand{\BL}{\mathbf{L}}
\newcommand{\BP}{\mathbf{P}}
\newcommand{\BQ}{\mathbf{Q}}
\newcommand{\BW}{\mathbf{W}}
\newcommand{\BR}{\mathbb{R}}
\newcommand{\diag}{\text{diag}}
\newcommand{\BGamma}{{\boldsymbol\Gamma}}
\newcommand{\BTheta}{{\boldsymbol\Theta}}
\newcommand{\BPhi}{{\boldsymbol\Phi}}
\newcommand{\CCl}{{\mathcal{C}_l}}
\newcommand{\prox}{\mathrm{prox}}
\newcommand{\subl}{l}
\newcommand{\qstar}{|\mathcal{Q}_G|}
\newcommand{\bdiag}{\text{block-diag}}
\title{\LARGE \bf 
Distributed Optimization of Clique-wise Coupled Problems 
}
\author{Yuto Watanabe and Kazunori Sakurama
\thanks{This work was partially supported by the joint project of Kyoto University and Toyota Motor Corporation, titled ``Advanced Mathematical Science for Mobility Society''.}
\thanks{Y. Watanabe and K. Sakurama are with the Department of Systems Science, Graduate School of Informatics, Kyoto University, Yoshida-Honmachi, Sakyo-ku, Kyoto 606--8501, Japan, {\tt\small y-watanabe@sys.i.kyoto-u.ac.jp, sakurama@i.kyoto-u.ac.jp}. 
}%
}
\begin{document}

\setlength{\abovedisplayskip}{4pt} 
\setlength{\belowdisplayskip}{4pt} 

\maketitle
\thispagestyle{empty}
\pagestyle{empty}

\begin{abstract}
    This study addresses a distributed optimization with a novel class of coupling of variables, called \textit{clique-wise coupling}.
    A clique is a node set of a complete subgraph of an undirected graph.
    This setup is an extension of pairwise coupled optimization problems (e.g., consensus optimization) and allows us to handle coupling of variables consisting of more than two agents systematically.
    To solve this problem, we propose a clique-based linearized ADMM algorithm, which is proved to be distributed.
    Additionally, we consider objective functions given as a sum of nonsmooth and smooth convex functions and present a more flexible algorithm based on the FLiP-ADMM algorithm.
    Moreover, we provide convergence theorems of these algorithms.
    Notably, all the algorithmic parameters and the derived condition in the theorems depend only on local information, which means that each agent can choose the parameters in a distributed manner.
    Finally, we apply the proposed methods to a consensus optimization problem and demonstrate their effectiveness via numerical experiments. 
\end{abstract}

\section{Introduction}

In recent years, distributed optimization has attracted much attention in the control, signal processing, and machine learning communities.
In this field, a large body of studies has been dedicated to \textit{pairwise coupled optimization problems}.
In this type of problem, every coupling of variables comprises two agents' decision variables corresponding to the communication path between the two agents.
The most representative example of this setup is consensus optimization problems, and many studies have presented sophisticated algorithms, such as \cite{Nedic2009-kd,Shi2015-bm, qu2017harnessing, Li2019-lj}.
Recently, \cite{Latafat2019-mb} and \cite{Li2023_primaldual} have investigated
distributed optimization problems with pairwise linear constraints.
Their applications are not limited to consensus optimization but contain formation control, distributed model predictive control, and so on.
Moreover, in the field of multi-agent control, 
various coordination tasks (e.g., rendezvous and formation) have been formulated in a pairwise coupled form \cite{Oh2015-fx, Bullo2009-ka, Sakurama2021-oy}.

This study addresses a more general form of distributed optimization than the conventional pairwise coupled ones to handle coupling of more than two decision variables.
Consider a multi-agent system with $n$ agents over a communication network, expressed by a time-invariant undirected graph $\CG=(\CN,\CE)$ with $\CN=\{1,\ldots,n\}$ and an edge set $\CE$.
Now, we aim to solve the following optimization problem, called a \textit{clique-wise coupled optimization problem}, in a distributed manner:
\begin{align}\label{prob}
\begin{array}{cl}
\underset{
\substack{x_i\in\BR^d,\,i\in\CN\\ y_\subl\in\BR^{q_\subl},\,l\in\QGactive}
}
{\mbox{minimize}}&
\displaystyle \sum_{i\in\CN}  f_i(x_i) + \sum_{l\in\QGactive} g_\subl (y_\subl) \\
\displaystyle\mbox{subject to}&
\underbrace{A_\subl x_\CCl + B_\subl y_\subl = c_\subl \quad \forall l\in \QGactive \subset\QG  }_{
\text{Clique-wise coupling w.r.t. }\CCl\subset\CN}
\end{array}
\end{align}
with $A_\subl \in \BR^{p_\subl \times |\CCl|d}$, $B_\subl \in \BR^{p_\subl \times q_\subl}$, and $c_\subl\in\BR^{p_\subl}$,
where $f_i:\BR^d\to\BR\cup\{+ \infty\},\,i\in \CN$ and $g_\subl:\BR^{q_\subl}\to\BR\cup\{+ \infty\},\,l\in\QGactive$ are proper, closed, and convex functions (possibly nonsmooth).
\begin{figure}[t]
    \centering
    \includegraphics[width=0.9\columnwidth]{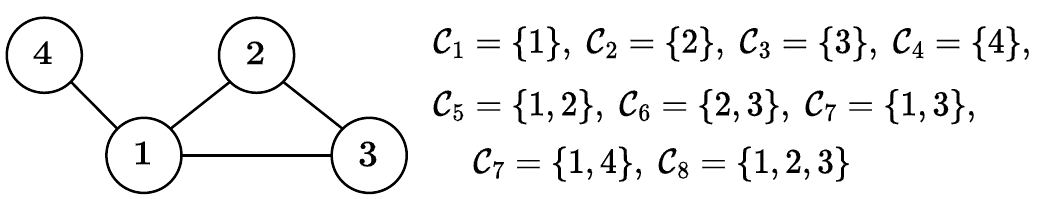}
    \caption{Example of clique.}
    \label{fig:clique_ex}
\end{figure}
The vector $x_i$ represents the decision variable of agent $i$, and $y_\subl$ represents the variable with respect to clique $l$.
For $x_1,\ldots,x_n$, and the set $\CCl=\{j_1,\ldots,j_{|\CCl|}\}\subset\CN$, let $x_{\CCl}$ denote $x_{\CCl}=[x_{j_1}^\top,\ldots,x_{j_{|\CCl|}}^\top]^\top\in\BR^{|\CCl|d}$.
Here, the set $\CCl$ represents a clique, i.e., a complete subgraph in the graph $\CG$ \cite{bollobas1998modern}.
$\QG$ is the index set of all the cliques in $\CG$, and $\QGactive$ is a subset of $\QG$.
For example, in the undirected graph in Fig. \ref{fig:clique_ex}, $\QG=\{1,\ldots,8\}$ holds, and the cliques $\CC_1,\ldots,\CC_8$ are obtained as shown in Fig. \ref{fig:clique_ex}.
Note that the nodes and edges are always cliques, and hence Problem \eqref{prob} always contains conventional pairwise coupled optimization problems.

A remarkable benefit of the clique-wise coupling framework is that we can systematically handle variable couplings of more than two agents.
Possible applications include consensus optimization \cite{Nedic2009-kd, Shi2015-bm, Bertsekas2015-nx, Li2019-lj}, formation control \cite{Oh2015-fx,Latafat2019-mb}, robust PCA (with clique-wise trace norm minimization) \cite{Xu2010-fh,Candes2011-nf}, network lasso (fused lasso, total variation regularization) \cite{Rudin1992-du,Wahlberg2012-uk,Hallac2015-fm}, multi-task learning \cite{Zhang2017-pn}, and (clique-wise) resource allocation \cite{Watanabe2023-xy}, as illustrated in Table \ref{table:applications}.
This table shows concrete formulations of these applications corresponding to Problem \eqref{prob}.

{\renewcommand\arraystretch{1.2}
\begin{table*}[ht]
\footnotesize
 \caption{Practical application examples of Problem \eqref{prob}.}
 \label{table:applications}
\begin{tabular}{|c|c|c|c|c}
\hline
Applications &
  Constraints &
  $f_{[l]}$ &
  \multicolumn{1}{l|}{$f_i$} \\ \hline
Consensus optimization \cite{Nedic2009-kd, Shi2015-bm, qu2017harnessing, Li2019-lj}
&  $x_\CCl-y_\subl=0$
& \begin{tabular}{c}
Indicator function for \\$\CD_l=\{y_\subl: \exists \xi \text{ s.t. } y_\subl =\mathbf{1}_{|\CCl|}\otimes  \xi\}$ \footnotemark[1]
  \end{tabular}
&  \multicolumn{1}{l|}{$f_i(x_i)$} \\ \hline
  
\begin{tabular}{c}
Robust PCA \cite{Xu2010-fh,Candes2011-nf}\\ (clique-wise trace norm minimization) 
 \end{tabular}
&
  $Y_j=S_j+L_j\quad \forall j\in\CCl$ \footnotemark[2]
&
  $\theta_\subl \|L_\CCl\|_*$ &
  \multicolumn{1}{l|}{$\|S_i\|_1$} \\ \hline
\begin{tabular}{c}
Formation control \cite{Bullo2009-ka, Oh2015-fx, Sakurama2021-oy,Latafat2019-mb}
\end{tabular}
&   \begin{tabular}{c}
$x_{i,t+1}=A_i x_{i,t}+B_i u_{i,t}$ \\ $y_{\subl,t}= x_{i,t}-x_{j,t}$ 
\end{tabular}
&
  \begin{tabular}{c}
 $\frac{1}{2} \sum_{t=1}^{T}\| [y_{\subl,t} - r_{ij}\|_{Q_{ij}}^2$ 
\end{tabular}
&  \multicolumn{1}{l|}{$\frac{1}{2} \sum_{t=1}^{T-1} u_{i,t}^\top R_i u_{i,t}$}  \\ \hline
\begin{tabular}{c}
Network lasso  \cite{Rudin1992-du,Wahlberg2012-uk,Hallac2015-fm}
\end{tabular} &
   $x_i-x_j -y_\subl = 0$ &
  $\lambda_{ij} \|y_\subl\|$ &
  \multicolumn{1}{l|}{loss function $\ell_i(x_i)$} \\ \hline
Multi-task learning \cite{Zhang2017-pn}
& 
\begin{tabular}{c}
$x_j - y_{\subl, j}=0$\\ $\forall j\in\CCl=\{ j_1,\ldots,j_{|\CCl|} \}$
\end{tabular}
& $\lambda_\subl \|[y_{\subl, j_1},\ldots, y_{\subl, j_{|\CCl|}} ]\|_*$ 
&
\multicolumn{1}{l|}{loss function $\ell_i(x_i)$} \\ \hline
(Clique-wise) resource allocation \cite{Watanabe2023-xy}
& 
$x_{\CCl} - y_{\subl}=0$
&   \begin{tabular}{c}
Indicator function for \\$\CD_l=\{y_\subl: \mathbf{1}^\top y_\subl = N_\subl>0,\,y_\subl \geq 0 \}$
  \end{tabular}
&
$\frac{1}{2} \|x_i - x_*\|^2$
\\ \hline
\end{tabular}
\end{table*}
}
\normalsize

In this study, we propose a novel distributed algorithm, the Clique-based Linearized ADMM (CL-ADMM) algorithm, for Problem \eqref{prob} based on the framework of the alternating direction method of multipliers (ADMM) \cite{Boyd2011-yu,Bertsekas2015-nx,Ryu2022-oc} with the linearization technique and localized algorithmic parameters.
The proposed method can be implemented with local communication. 
Additionally, we consider that the objective functions $f_i$ and $g_\subl$ are given as a sum of a non-smooth convex and smooth convex functions.
Under this setup, we provide a more flexibly implementable algorithm, called  the Clique-based Linearized FLiP-ADMM (CL-FLiP-ADMM), based on the Function Linearized Proximal ADMM (FLiP-ADMM) algorithm in \cite{Ryu2022-oc}.
Furthermore, through the convergence analysis of the CL-ADMM and CL-FLiP-ADMM, we prove the exact convergence to an optimal solution under fully localized conditions.
Finally, we apply the proposed methods to consensus optimization and demonstrate their effectiveness through numerical experiments.
The experimental result of consensus optimization implies that the clique-wise handling of pairwise constraints can enhance the performance.

The major contributions of this paper are as follows:
    (a) We propose a highly expressive framework in \eqref{prob} suitable for distributed optimization and provide a variety of practical examples as shown in Table \ref{table:applications};
    (b) The algorithmic parameters in the CL-ADMM and CL-FLiP-ADMM are also distributed.
    Moreover, we provide convergence theorems with no global parameter, which means that each agent can choose its parameters in a distributed manner;
    (c) The CL-ADMM and CL-FLiP-ADMM can alleviate the computational burden because the agents in the same clique can share the computation of the proximal mapping of $g_\subl$.

Recently, several studies (e.g., \cite{Chang2016-lz,Falsone2020-mr,Notarnicola2020-bn,Wu2023-vn}) have presented distributed algorithms to solve optimization problems with globally coupled constraints.
This setup allows us to consider constraints involving all decision variables.
However, the existing methods for this setup cannot enjoy our proposed methods' features, such as the contributions (b) and (c) in the above paragraph.
Moreover, the clique-wise coupled problem in \eqref{prob} can be solved more flexibly than the globally coupled problems because the primal problem \eqref{prob} can be solved directly by well-known methods (e.g., ADMM \cite{Bertsekas2015-nx,Boyd2011-yu,Ryu2022-oc} and Condat-V\~{u} \cite{Condat2013-ld,Vu2013-ip}) in a distributed manner.
This is advantageous in terms of ease of solving problems.
To apply these well-known methods to the globally coupled problems, we need to introduce additional auxiliary variables (e.g., to take their dual problems to transform them into a consensus optimization).
This approach may increase the size of variables and degrade the performance.

Finally, notice that Problem \eqref{prob} is more general than optimization problems with clique-wise coupled constraints in the authors' paper \cite{Watanabe2023-xy} because we can handle not only constraints but also regularization terms in Problem \eqref{prob}.

The rest of the paper is organized as follows.
Section \ref{sec:pre} provides preliminaries.
Section \ref{sec:alg} presents the proposed algorithms, and
Section \ref{sec:convergence} presents the convergence theorems.
In Section \ref{sec:case_study}, we apply the proposed methods to consensus optimization.
Finally, Section \ref{sec:conclusion} concludes our paper.

\footnotetext[1]{
In this formulation, we can include some proximable functions (e.g., $\ell_1$ norm regularization) in $g_\subl$. For details, see Subsection \ref{subsec:consensus}. 
}
\footnotetext[2]{Here, $Y=[Y_1,\ldots,Y_n]$ is a data matrix. For example, $Y$ is a sequence of images, and each column of $Y$ corresponds to each frame.
We assume that $Y$ can be decomposed into a sparse matrix $\hat{S}=[S_1,\ldots,S_n]$ and a low-rank matrix $L=[L_1,\ldots,L_n]$.}
\section{
Preliminaries
}\label{sec:pre}
\subsection{Notation}\label{subsec:notation}
Let $\mathbb{R}$ and $\mathbb{N}$ be the set of real numbers and that of positive integers, respectively.
Let $|\cdot|$ be the number of elements in a countable finite set.
Let $I_d\in \mathbb{R}^{d\times d}$ denote the $d\times d$ identity matrix.
We omit the subscript of $I_d$ when the dimension is obvious.
Let $\mathbf{1}_d = [1,\ldots,1]^\top\in \mathbb{R}^{d}$.
For a $m$-dimensional vector $[a_1,\ldots,a_i,\ldots,a_N]^\top\in\mathbb{R}^N$, $\mathrm{diag}(a)$ denotes the diagonal matrix whose $i$th diagonal entry is $a_i$.
Similarly, for matrices $R_1,\ldots,R_i,\ldots,R_N$, $\bdiag(R_1,\ldots,R_N)$ represents the block diagonal matrix whose $i$th diagonal block is $R_i$. 
For $\mathcal{M}=\{1,\ldots,N\}$, $\bdiag([R_j]_{j\in\mathcal{M}})$ represents $\bdiag(R_1,\ldots,R_N)$.
For $v,\,u\in\BR^m$, and a positive definite and symmetric matrix $Q\in\mathbb{R}^{m\times m}$, $\langle v,u \rangle_Q := v^\top Q u$ denotes the inner product of $v$ and $u$ with respect to $Q$.
Additionally, we define the norm $\|\cdot\|_Q$ as $\| v \|_Q=\sqrt{\langle v,v \rangle_Q }$ for a vector $v\in\mathbb{R}^m$.
When $Q=I_m$, we simply write $\langle \cdot,\cdot \rangle_{I_m}$ and $\|\cdot\|_{I_m}$ as $\langle \cdot,\cdot \rangle$ and $\|\cdot\|$, respectively.
For $v\in\BR^m$, $\|v\|_1$ denotes the $\ell_1$ norm of $v$.
For a matrix $R\in\BR^{d_1\times d_2}$, $\|R\|_*$ denotes the trace norm of $R$, i.e., the sum of its singular values.
For $Q\in\BR^{m \times m}$, let $\lambda_\mathrm{max}(Q)$ and $\lambda_\mathrm{min}(Q)$ be the largest and smallest eigenvalues of $Q$, respectively.
For a vector $\mathbf{v}=[v_1^\top,\ldots,v_j^\top,\ldots,v_{N}^\top]^\top\in\mathbb{R}^{N d}$ with vectors $v_1,\ldots,v_j,\ldots,v_{N}\in\mathbb{R}^d$, $[\Bv]_j$ represents the operation to extract the $j$th vector $v_j$ from $\Bv$, that is,
\begin{align*}
   [\mathbf{v}]_j=v_j\in\mathbb{R}^{d}.
\end{align*}
For a vector $\Bx=[x_1^\top,\ldots,x_n^\top]^\top\in\BRnd$ with $x_1,\ldots,x_n\in\mathbb{R}^d$ and  a subset $\CC=\{j_1,\ldots,j_{|\CC|}\}\subset\CN$, let $x_{\CC}$ be 
    $x_\CC = [x_{j_1}^\top,\ldots,x_{j_{|\CC|}}^\top]^\top \in \mathbb{R}^{|\CC|d}$,
where $\{j_1,\ldots,j_{|\CC|}\}$ is a strictly monotonically increasing sequence.
For a differentiable function $f:\BRnd\to\mathbb{R}$ and $\Bx\in\BRnd$, we write $\nabla f(\Bx)=\pd f/\pd \Bx(\Bx)$.

For a proper, closed, and convex function $g:\BR^d:\to\BR\cup\{+ \infty\}$, a positive definite and symmetric matrix $Q\in\BR^{d\times d}$, and $x\in\BR^d$, the proximal mapping of $g$ with respect to $Q$ is represented by $\prox^Q_g(x) = \argmin_{y\in\BR^d} \{ g(y) + (1/2) \|x-y\|_Q^2\}$.
When $Q=I_d$, we write $\prox_{g}^{I_d}(\cdot)=\prox_g(\cdot)$.
When the proximal mapping of $g$ can be computed efficiently, the function $g$ is said to be \textit{proximable}.

\subsection{Graph Theory}
Here, we provide graph-theoretic concepts.
Consider a graph $\CG=(\CN,\CE)$ with a node set $\CN=\{1,\ldots,n\}$ and an edge set $\CE$ consisting of pairs $\{i,j\}$ of different nodes $i,j\in \CN$.
Note that throughout this paper, we consider undirected graphs and do not distinguish $\{i,j\}$ and $\{j,i\}$ for each $\{i,j\}\in\CE$.
For $i\in\CN$ and $G$, let $\CN_i\subset \CN$ be the \textit{neighbor set} of node $i$ over $\CG$, defined as $\CN_i=\{j\in \mathcal{N}:\{i,j\}\in \mathcal{E}\}\cup\{i\}$.

For an undirected graph $\CG$,
consider a set $\CC\subset\CN$.
For $\CC$ and $\CE$,  let $\CE|_{\CC}$ be $\mathcal{E}|_{\mathcal{C}}=\{\{i,j\}\in \mathcal{E}:i,j\in \mathcal{C}\}$. We call $\CG|_\CC=(\CC,\CE|_\CC)$ a subgraph induced by $\CC$.
If $\CG|_\CC$ is complete, $\CC$ is called a \textit{clique} in $\CG$.
We define $\QG=\{1,2,\ldots,q\}$ as the set of indices of all the cliques in $\CG$.
For $\QG$, the set $\QGactive$ represents a subset of $\QG$.
If a clique $\CC$ is not contained by any other cliques, $\CC$ is said to be \textit{maximal}.
Let $\maxQG(\subset\QG)$ be the set of indices of all the maximal cliques in $\CG$. 
For $i\in\CN$, we define $\QiG$ as an index set of all the cliques containing $i$, that is, $\QiG=\{l\in \QG:i\in \mathcal{C}_l \}$.
For each $i\in\CN$, $\CN_i$, and $\CC_l,\,l\in\QiG$, 
\begin{equation}\label{eq:neighbor_clique}
    \CN_i=\bigcup_{l\in \QiG} \CC_l.
\end{equation}
holds \cite{Sakurama2021-oy}.
Note that agent $i$ can independently compute
the cliques that it belongs to, i.e.,
$\CCl,\,l\in\QiG$, from the undirected subgraph $(\CN_i,\CE_i)$ with $\CE_i = \{\{i,j\}\in\CE:j\in\CN_i\}$.

\section{Algorithm Description}\label{sec:alg}
\subsection{Clique-based Linearized ADMM Algorithm}\label{subsec:alg}
The proposed algorithm to solve \eqref{prob}, the Clique-based Linearized ADMM (CL-ADMM) algorithm, is illustrated in Algorithm \ref{alg:a-w-Proposed}.
This algorithm is based on the linearized ADMM algorithm \cite{Boyd2011-yu,Ryu2022-oc} and can be implemented in a distributed manner from \eqref{eq:neighbor_clique}.
Here, let $\QiGactive:=\QiG\cap\QGactive$.
Besides, $\alpha_i,\,i\in\CN$ is an agent-wise algorithmic parameter, and $\beta_\subl,\,\gamma_\subl$, and $\phi_\subl>0,\,l\in\QGactive$ are clique-wise parameters.
In Algorithm \eqref{alg:a-w-Proposed}, $y_\subl^k$ represents the estimate of an optimal $y_\subl$, and $u_\subl^k$ represents the dual variable for $l\in\QGactive$
at the $k$th iteration.
In the $x_i$-update in \eqref{alg:xi-update}, $\pi_\subl:\CCl\to\{1,\ldots,|\CCl|\}$ is the one-to-one mapping satisfying 
    $\pi_\subl (i_j) = j$
for $\CCl=\{i_1,\ldots,i_j,\ldots,i_{|\CCl|}\}$ with $1\leq i_1<i_j<i_{|\CCl|}\leq n$.
Note that if the $y_\subl$-subproblem in \eqref{alg:yl-update} has multiple solutions, the agents in $\CCl$ must choose the same value as $y_\subl^{k+1}$.
\begin{exam}
Consider $\CCl=\{2,3,5\}$ and $y_\subl= [ y_{\subl,1}^\top, y_{\subl,2}^\top, y_{\subl,3}^\top]^\top\in\BR^{|\CCl|d}$.
Then, $\pi_\subl(2)=1$, $\pi_\subl(3)=2$, and $\pi_\subl(5)=3$ hold.
Besides, for $y_\subl$, $[y_\subl ]_{\pi_\subl(2)}=y_{\subl,1}$, $[y_\subl ]_{\pi_\subl(3)}=y_{\subl,2}$, and $[y_\subl ]_{\pi_\subl(5)}=y_{\subl,3}$ are obtained.
\end{exam}
\begin{algorithm}[t]
\caption{
Clique-based Linearized ADMM (CL-ADMM) Algorithm
}
\label{alg:a-w-Proposed}
\begin{algorithmic}[1]
\Require $x_i^0$, $\alpha_i>0$, $y_\subl^0$, $u_\subl^0$, $\beta_\subl>0$, $\gamma_\subl>0$, and $\phi_\subl>0$ for all $l\in\QiGactive\:(:=\QiG\cap\QGactive)$.
\For {$k = 0,1,\ldots$}
\State Update $x_i^k$ by 
\begin{subequations}\label{alg:Proposed_update-rule}
\begin{align}\label{alg:xi-update}
x_i^{k+1} &=\textstyle \prox_{\alpha_i f_i}  
\bigl( x_i^k-\alpha_i \sum_{l\in\QiGactive} 
\bigl[ A_\subl^\top  u_\subl^k \nonumber\\
&\quad\quad +\gamma_\subl A_\subl^\top  \left( A_\subl x_\CCl^k  + B_\subl y_\subl^k   - c_\subl \right) \bigr]_{\pi_\subl(i)} \bigr).
\end{align}
\State Send $x_i^{k+1}$ to all agents in $\CN_i\setminus \{i\}$.
\State Update $y_\subl^k$ and $u_\subl^k$ for all $ l\in\QiGactive$ by
\begin{align}\label{alg:yl-update}
y_\subl^{k+1} &= \prox_{\beta_\subl g_\subl} 
\bigl(y_\subl^k 
- \beta_\subl \bigl(B_\subl^\top u_\subl^k \nonumber\\
&\quad\quad\quad\quad\quad + \gamma_\subl
B_\subl^\top \bigl( A_\subl x_\CCl^{k+1} + B_\subl y_\subl^k - c_\subl 
\bigr) \bigr) \bigr)\\
\label{alg:ul-update}
u_\subl^{k+1}&=u_\subl^k+ \phi_\subl \gamma_\subl
    \left(A_\subl x_\CCl^{k+1} + B_\subl y_\subl^{k+1} -c_\subl \right).
\end{align}
\end{subequations}
\EndFor\normalsize
\end{algorithmic}
\end{algorithm}


We provide an interpretation of Algorithm \ref{alg:a-w-Proposed}.
First, we give the aggregated form of Problem \eqref{prob} as follows:
\begin{align}\label{prob:aggregated}
\begin{array}{ll}
\underset{
\Bx,\,\By
}{\mbox{minimize}}
&
\displaystyle F(\Bx) + G(\By)\\
\displaystyle\mbox{subject to}
&\BA \BW \Bx + \BB \By = \Bc,
\end{array}
\end{align}
where 
    $F(\Bx) = \sum_{i\in\CN} f_i(x_i)$,
    $G(\By) = \sum_{l\in\QGactive} g_\subl (y_\subl)$, 
$\Bx=[x_1^\top,\ldots,x_n^\top]^\top$, $\By=[y_1^\top,\ldots,y_{\qstar}^\top]^\top$, $\BA=\bdiag([A_\subl]_{l\in\QGactive})$, $\BB=\bdiag([B_\subl]_{l\in\QGactive})$, and $\Bc=[c_1^\top,\ldots,c_{\qstar}^\top]^\top$.
Now, $\BW$ is defined as the matrix satisfying the following relationships for any $\Bx\in\BRdn$:
\begin{align}\label{def:W}
    &\BW    = \left[ W_1^\top,\ldots,W_{\qstar}^\top \right]^\top
\in \BR^{\left(\sum_{l\in\QGactive}|\CCl|\right)d\times nd},\nonumber\\
&W_\subl \Bx = x_\CCl \quad \forall l\in \QGactive.
\end{align}
Then, the Lagrangian function is obtained as follows:
\begin{equation}\label{lagrangian}
\CL(\Bx,\By,\Bu) = F(\Bx)+G(\By)
+ \langle \Bu, \BA\BW \Bx+\BB\By-\Bc \rangle.
\end{equation}


By applying the linearized ADMM algorithm \cite{Boyd2011-yu,Ryu2022-oc} to the problem in \eqref{prob:aggregated}, we obtain the following algorithm, where the scalar-valued constants in the normal linearized ADMM are replaced by the diagonal matrices $\BD_\alpha$, $\BD_\beta$, $\BGamma$, and $\BPhi$:
\begin{subequations}\label{alg:Full-Proposed}
\begin{align}
\label{alg:Bx_update}
 \Bx^{k+1} &= \prox_{F}^{\BD_\alpha^{-1}}( \Bx^k - \BD_\alpha \BW^\top \BA^\top (\Bu^k \nonumber \\
 & \quad\quad\quad\quad\quad \quad
 + \BGamma (\BA\BW\Bx^k+\BB\By^k-\Bc) )) \\
 \By^{k+1} &= \prox_{G}^{\BD_\beta^{-1}} ( \By^k - \BD_\beta \BB^\top (\Bu^k \nonumber 
 \\ &\quad\quad\quad\quad\quad \quad
 +\BGamma(\BA\BW\Bx^{k+1}+\BB\By^k-\Bc))  ) \\
 \Bu^{k+1} &= \Bu^k + \BPhi \BGamma (\BA\BW\Bx^{k+1}+\BB\By^{k+1}-\Bc).
\end{align}
\end{subequations}
Then, by setting 
\begin{align*}
    &\BD_\alpha = \bdiag (\alpha_1 I_{d},\ldots,\alpha_n I_{d}) \\
    &\BD_\beta = \bdiag ([ \beta_\subl I_{q_\subl}]_{j\in\QGactive}) \\
    &\BGamma = \bdiag ( [ \gamma_\subl I_{p_\subl}]_{j\in\QGactive}) \\
    &\BPhi = \bdiag ( [ \phi_\subl I_{p_\subl}]_{j\in\QGactive})
\end{align*}
with $\alpha_i>0,\,i\in\CN$ and $\beta_\subl,\,\gamma_\subl,\,\phi_\subl>0,\,l\in\QGactive$,
we obtain the CL-ADMM in Algorithm \ref{alg:a-w-Proposed} from Algorithm \eqref{alg:Full-Proposed}.
This follows from the following proposition.
\begin{prop}\label{lem:WTu}
For all $i\in\CN$, $\CCl,\,l\in\QiGactive$, $\BW$ in \eqref{def:W}, the mapping $\pi_\subl:\CCl\to\{1,\ldots,|\CCl|\}$, and $\Bv=[v_1^\top,\ldots,v_\subl^\top,\ldots,v_{\qstar}^\top]^\top$ with any $v_\subl\in\BR^{|\CCl|d}$, the following equality is satisfied:
\begin{equation*}
    [\BW^\top \Bv]_i = \sum_{l\in\QiGactive} [v_\subl]_{\pi_\subl(i)} \in \BR^d.
\end{equation*}
\begin{proof}
       From \eqref{def:W}, the $(j,i)$ block of $W_\subl \in\BR^{|\CCl|d\times nd}$ can be written as $[W_\subl]_{ji}= w_{\subl,ji}I_d $, where
\begin{equation}\label{def:w_ij}
    w_{\subl,ji} = 
    \begin{cases}
    1,& i\in\CCl \text{ and }\pi_\subl(i)=j\\
    0,& \text{otherwise}
    \end{cases}.
\end{equation}
Additionally, we have $\BW^\top\Bu = \sum_{l\in\QGactive} (W_\subl)^\top u_\subl$.
Then, for the $i$th block $[\BW^\top\Bv]_i\in\BR^d$ of $\BW^\top\Bv$, we obtain $[\BW^\top\Bv]_i 
    = \sum_{l\in\QGactive}\left( \sum_{j=1}^{|\CCl|} [W_\subl]_{ji} [v_\subl]_j \right) 
    = \sum_{l\in\QiGactive} [v_\subl]_{\pi_\subl(i)}$
because $[W^\subl]_{ji} [v^\subl]_{j}= [v^\subl]_{\pi_\subl(i)}$ holds for $j$ satisfying $\pi_\subl(i)=j$, and $[W^\subl]_{ji} [v^\subl]_{j}= 0$ holds otherwise.
\end{proof}
\end{prop}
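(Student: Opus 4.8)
The plan is to make the abstract matrix $\BW$ from \eqref{def:W} completely explicit in block form, and then read off the claimed identity by carrying out the transpose and the matrix--vector product block by block. This is essentially a bookkeeping argument about selection matrices, so the first task is to record the block structure of each $W_\subl$.

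First I would observe that the defining relation $W_\subl\Bx = x_\CCl$ for all $\Bx\in\BRdn$ determines $W_\subl$ uniquely, and forces the $(j,i)$ block of $W_\subl$ to be $w_{\subl,ji}I_d$ with $w_{\subl,ji}$ as in \eqref{def:w_ij}: indeed $x_\CCl$ lists the $i$-th block of $\Bx$ in slot $\pi_\subl(i)$ exactly when $i\in\CCl$, and contains no other block of $\Bx$. This is the only step where the definition of the clique $\CCl$ and of the index map $\pi_\subl$ actually enters the proof.

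Next, since $\BW$ is the vertical stack $[W_1^\top,\ldots,W_{\qstar}^\top]^\top$, its transpose is the corresponding horizontal stack, so $\BW^\top\Bv=\sum_{l\in\QGactive}W_\subl^\top v_\subl$. Taking the $i$-th block of both sides and using that $[W_\subl]_{ji}=w_{\subl,ji}I_d$ is symmetric, I get $[\BW^\top\Bv]_i=\sum_{l\in\QGactive}\sum_{j=1}^{|\CCl|} w_{\subl,ji}\,[v_\subl]_j$. Now I would evaluate the inner sum: for a fixed $l$, if $i\notin\CCl$ then every $w_{\subl,ji}$ vanishes, while if $i\in\CCl$ then $\pi_\subl$ being a bijection leaves exactly one index $j=\pi_\subl(i)$ with $w_{\subl,ji}=1$ and all others zero, so the inner sum equals $[v_\subl]_{\pi_\subl(i)}$. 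Hence only the cliques containing $i$ survive, i.e.\ the indices in $\QiG\cap\QGactive=\QiGactive$, which yields $[\BW^\top\Bv]_i=\sum_{l\in\QiGactive}[v_\subl]_{\pi_\subl(i)}$, the desired identity.

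I do not anticipate a genuine obstacle here; the content is purely structural. The only points that need a little care are making the block-transpose step precise, namely that $[W_\subl^\top]_{ij}=([W_\subl]_{ji})^\top=w_{\subl,ji}I_d$, and being explicit that it is the injectivity of $\pi_\subl$ on $\CCl$ that collapses the inner sum to a single term.
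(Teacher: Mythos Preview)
Your proposal is correct and follows essentially the same approach as the paper: identify the $(j,i)$ block of $W_\subl$ as $w_{\subl,ji}I_d$, expand $\BW^\top\Bv=\sum_{l\in\QGactive}W_\subl^\top v_\subl$, and collapse the inner sum over $j$ using that $\pi_\subl(i)=j$ for exactly one $j$ when $i\in\CCl$ and no $j$ otherwise. You are simply a bit more explicit about the transpose step and the injectivity of $\pi_\subl$, but the argument is the same.
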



\begin{rem}\label{rem:share}
One of the advantages of the CL-ADMM in Algorithm \ref{alg:a-w-Proposed} is that
 the agents in the same clique $\CCl$ can share the computation of the $y_\subl$-subproblem in \eqref{alg:yl-update}.
Hence, we can alleviate computational burdens per iteration by allocating the computation.
\end{rem}

\begin{rem}
    The proposed algorithm is based on the linearized ADMM, which is essential for its distributed implementation.
    This is because the augmented Lagrangian of \eqref{lagrangian} can be separated into an agent-wise form by eliminating its coupled terms by the linearization technique.
\end{rem}

\subsection{Clique-based Linearized FLiP-ADMM Algorithm}

We provide a more flexible algorithm, the Clique-based Linearized FLiP-ADMM (CL-FLiP-ADMM) algorithm, than Algorithm \ref{alg:a-w-Proposed} based on the FLiP-ADMM algorithm in \cite{Ryu2022-oc}.
Suppose that the objective functions $f_i$ and $g_\subl$ can be separated as
\begin{equation}\label{def:objective_smooth}
    f_i = f_{i}^1 + f_{i}^2,\quad g_\subl = g_\subl^1 + g_\subl^2,
\end{equation}
where $f_i^1$ and $g_\subl^1$ are proper, closed, and convex functions, and $f_i^2$ and $g_\subl^2$ are proper, closed, convex, and smooth functions.

Now, we present the Clique-based Linearized FLiP-ADMM (CL-FLiP-ADMM) algorithm in Algorithm \ref{alg:FLiP_linearized}.
This algorithm can be implemented distributedly in the same manner as Algorithm \ref{alg:a-w-Proposed}.
When $f_i^2=g_\subl^2=0$, this algorithm is reduced to Algorithm \ref{alg:a-w-Proposed}.
Notably, this algorithm allows us to avoid computing the proximal map involving $f_i^2$ and $g_\subl^2$.

Note that although the FLiP-ADMM algorithm with the linearization technique is named as doubly linearized ADMM in \cite{Ryu2022-oc}, we refer to Algorithm \ref{alg:FLiP_linearized} as CL-FLiP-ADMM for the sake of consistency with Algorithm \ref{alg:a-w-Proposed}.

\begin{algorithm}[t]
\caption{
Clique-based Linearized FLiP-ADMM (CL-FLiP-ADMM) Algorithm
}
\label{alg:FLiP_linearized}
\begin{algorithmic}[1]
\Require $x_i^0$, $\alpha_i>0$, $y_\subl^0$, $u_\subl^0$, $\beta_\subl>0$, $\gamma_\subl>0$, and $\phi_\subl>0$ for all $l\in\QiGactive\:(:=\QiG\cap\QGactive)$.
\For {$k = 0,1,\ldots$}
\State Update $x_i^k$ by 
\begin{subequations}
\begin{align}\label{alg:xi_update_FLiP}
x_i^{k+1} &= \prox_{\alpha_i f_i^1}  
\bigl( x_i^k-\alpha_i  \bigl( \nabla f_i^2(x_i^k) + \sum_{l\in\QiGactive}\bigl[ A_\subl^\top  u_\subl^k 
\nonumber \\
&\quad\quad
+  \gamma_\subl A_\subl^\top  \bigl( A_\subl x_\CCl^k  + B_\subl y_\subl^k   - c_\subl \bigr) \bigr]_{\pi_\subl(i)} \bigr)
\bigr).
\end{align}
\State Send $x_i^{k+1}$ to all agents in $\CN_i\setminus \{i\}$.
\State Update $y_\subl^k$ and $u_\subl^k$ for all $ l\in\QiGactive$ by
\begin{align}\label{alg:yl-update_FLiP}
y_\subl^{k+1} &= \prox_{\beta_\subl g_\subl^1} 
\bigl(y_\subl^k 
- \beta_\subl  \bigl( \nabla g_\subl^2(y_\subl^k)+ B_\subl^\top u_\subl^k \nonumber\\
&\quad\quad + \gamma_\subl B_\subl^\top
\bigl(A_\subl x_\CCl^{k+1} + B_\subl y_\subl^k - c_\subl
\bigr) \bigr) \bigr)\\
\label{alg:ul-update_FLiP}
    u_\subl^{k+1}&=u_\subl^k+ \phi_\subl \gamma_\subl
    \left(A_\subl x_\CCl^{k+1} + B_\subl y_\subl^{k+1} -c_\subl \right).
\end{align}
\end{subequations}
\EndFor\normalsize
\end{algorithmic}
\end{algorithm}

\section{Convergence Analysis}\label{sec:convergence}

This section presents the key convergence theorems of the CL-ADMM in Algorithm \ref{alg:a-w-Proposed} and CL-FLiP-ADMM in Algorithm \ref{alg:FLiP_linearized}.
We now assume the following assumption.
\begin{assu}\label{assu:convergence}
The following statements hold:
\begin{itemize}
    \item[(a)] $f_i:\BR^d\to\BR\cup\{+ \infty\},\,i\in\CN$ is proper, closed, and convex. Additionally, for $f_i$ of the form in \eqref{def:objective_smooth}, $f_i^2:\BR^d\to\BR$ is convex and differentiable, and the gradient of $f_i^2$ is $L_{f_i^2}$-Lipschitz continuous, i.e., $ \|\nabla f_i^2(x_i)-\nabla f_i^2(z_i)\| \leq L_{f_i^2}\|x_i-z_i\|$ holds for any $x_i,\,z_i\in\BR^{d}$ and some $L_{f_i^2}\geq0$.
    \item[(b)] $g_\subl:\BR^{q_\subl}\to\BR\cup\{+ \infty\},\,l\in\QGactive$ is proper, closed, and convex. Additionally, for $g_\subl$ of the form in \eqref{def:objective_smooth}, $g_\subl^2:\BR^{q_\subl}\to\BR$ is convex and differentiable, and the gradient of $g_\subl^2$ is $L_{g_\subl^2}$-Lipschitz continuous with some $L_{g_\subl^2}\geq0$.
    \item[(c)] The Lagrangian $\CL$ in \eqref{lagrangian} has a saddle point.
\end{itemize}
\end{assu}

First, we provide the following theorem.
In this theorem, 
each agent can check the conditions in a distributed fashion.
Regarding the choice of the parameters, a detailed discussion is available in Chapter 8 of \cite{Ryu2022-oc}.
\begin{theorem}\label{thm:convergence}
Consider Algorithms \ref{alg:a-w-Proposed} and \ref{alg:FLiP_linearized}.
Assume that Assumption \ref{assu:convergence} is satisfied.
Assume that for all $i\in\CN$, all $l\in\QGactive$, and some $\varepsilon_\subl \in(0,2-\phi_\subl)$, the following inequalities hold:
\begin{subequations}\label{stepsize_condition}
\begin{align}
\label{stepsize_condition1}
    &\alpha_i^{-1} \geq \sum_{l\in\QiGactive}\gamma_\subl \lambda_\mathrm{max}(A_\subl^\top A_\subl) + L_{f^2_i} ,\quad \\
\label{stepsize_condition2}
    &\beta_\subl^{-1}-  \gamma_\subl \lambda_\mathrm{max}(B_\subl^\top B_\subl) \geq 0
    , \quad  \\
\label{stepsize_condition3}
    &\gamma_\subl\left(1-\frac{(1-\phi_\subl)^2}{2-\phi_\subl -\varepsilon_\subl }\right)B_\subl^\top B_\subl +Q_\subl \succeq 3 L_{g^2_\subl} I_{q_\subl},
\end{align}
\end{subequations}
where $Q_\subl = \beta_\subl^{-1}I_{q_\subl}- \gamma_\subl B_\subl^\top B_\subl$.
Then, $\lim_{k\to\infty}(F(\Bx^k)+G(\By^k))=(F(\Bx^*)+G(\By^*))$ and $\lim_{k\to\infty}(\BA\BW\Bx^k+\BB\By^k-\Bc)=0$ hold, where $(\Bx^*,\By^*,\Bu^*)$ is a saddle point of the Lagrangian function $\CL$.
\end{theorem}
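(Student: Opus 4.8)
## Proof Proposal

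The plan is to reduce the convergence claim for Algorithms \ref{alg:a-w-Proposed} and \ref{alg:FLiP_linearized} to a known convergence result for the FLiP-ADMM algorithm applied to the aggregated problem \eqref{prob:aggregated}, and then translate the abstract step-size condition of that result into the fully localized inequalities \eqref{stepsize_condition}. The first step is to observe that, by Proposition \ref{lem:WTu} and the block-diagonal choices of $\BD_\alpha$, $\BD_\beta$, $\BGamma$, $\BPhi$, Algorithm \ref{alg:FLiP_linearized} (and its special case Algorithm \ref{alg:a-w-Proposed} with $f_i^2=g_\subl^2=0$) is exactly the iteration \eqref{alg:Full-Proposed} augmented with the $\nabla f_i^2$ and $\nabla g_\subl^2$ linearization terms, i.e.\ it is the FLiP-ADMM/doubly-linearized-ADMM iteration of \cite{Ryu2022-oc} for minimizing $F^1(\Bx)+F^2(\Bx)+G^1(\By)+G^2(\By)$ subject to $\BA\BW\Bx+\BB\By=\Bc$, with metric matrices $\BD_\alpha^{-1}$ on the $\Bx$-block and $\BD_\beta^{-1}$ on the $\By$-block. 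Here $F^2(\Bx)=\sum_i f_i^2(x_i)$ has $\nabla F^2$ that is block-diagonally Lipschitz with constant $\max_i L_{f_i^2}$, and similarly for $G^2$; Assumption \ref{assu:convergence}(c) supplies the saddle point required by the black-box theorem.

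Next I would invoke the convergence theorem for FLiP-ADMM from Chapter 8 of \cite{Ryu2022-oc}. Its hypotheses, specialized to our setting, ask for: (i) a relation of the form $\BD_\alpha^{-1}\succeq \gamma\,(\BA\BW)^\top(\BA\BW)$-type bound plus the Lipschitz constant of $\nabla F^2$, controlling the $\Bx$-linearization; (ii) $\BD_\beta^{-1}\succeq \BGamma^{1/2}\BB^\top\BB\BGamma^{1/2}$-type positivity ensuring the $\By$-proximal metric $Q$ is positive semidefinite; and (iii) a combined condition on $Q$, $\BB^\top\BB$, the relaxation parameter $\BPhi$, and $L_{\nabla G^2}$ of the form appearing in \eqref{stepsize_condition3}, with the free parameter $\varepsilon_\subl\in(0,2-\phi_\subl)$ coming from the standard decomposition of the cross terms in the Lyapunov/energy inequality for over-relaxed ADMM. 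Because $\BA$, $\BB$, $\BGamma$, $\BPhi$, $\BD_\alpha$, $\BD_\beta$ are all block-diagonal and the coupling matrix $\BW$ merely stacks/copies blocks, each of these aggregated matrix inequalities decouples. The key computation is the $\Bx$-block one: $\BW^\top\BA^\top\BGamma\BA\BW = \mathrm{block\text{-}diag}\big([\sum_{l\in\QiGactive}\gamma_\subl A_\subl^\top A_\subl]_{i\in\CN}\big)$ by the structure of $\BW$ in \eqref{def:W}, so $\BD_\alpha^{-1}\succeq \BW^\top\BA^\top\BGamma\BA\BW + \mathrm{block\text{-}diag}(L_{f_i^2}I_d)$ is equivalent, block by block, to \eqref{stepsize_condition1}. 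Conditions \eqref{stepsize_condition2} and \eqref{stepsize_condition3} are the clique-wise (block-$l$) restrictions of (ii) and (iii), with $Q_\subl=\beta_\subl^{-1}I-\gamma_\subl B_\subl^\top B_\subl$ being the $l$-block of the aggregated $Q$, and $3L_{g_\subl^2}$ the local Lipschitz bound that dominates the aggregated $\nabla G^2$ on that block. Finally, the conclusion of the FLiP-ADMM theorem — convergence of the objective value to the optimum and of the constraint residual to zero — is exactly the stated limits, since $F(\Bx^*)+G(\By^*)$ equals the optimal value of \eqref{prob:aggregated} at any saddle point.

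The main obstacle I anticipate is the bookkeeping in step two: verifying that the abstract matrix condition in \cite{Ryu2022-oc} really decouples into \eqref{stepsize_condition} and, in particular, pinning down where the constant $3$ in front of $L_{g_\subl^2}$ and the admissible interval $\varepsilon_\subl\in(0,2-\phi_\subl)$ originate. This requires reproducing (at least schematically) the one-iteration descent inequality of FLiP-ADMM: writing the Lyapunov function combining $\|\Bx^k-\Bx^*\|_{\BD_\alpha^{-1}}^2$, $\|\By^k-\By^*\|_{Q}^2$, and $\|\Bu^k-\Bu^*\|_{(\BPhi\BGamma)^{-1}}^2$; bounding the three problematic cross terms (the $\By$-linearization error against $B_\subl^\top B_\subl$, the smoothness error of $g_\subl^2$ contributing the threefold $L_{g_\subl^2}$ via Young's inequality applied to three separate inner products, and the over-relaxation term producing the $(1-\phi_\subl)^2/(2-\phi_\subl-\varepsilon_\subl)$ factor); and checking that \eqref{stepsize_condition} makes the net per-iteration change nonpositive, so summability of the residuals and hence the claimed limits follow. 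Once this energy inequality is laid out, the remaining algebra — Cauchy–Schwarz, Young's inequality, and collecting block-diagonal terms — is routine.
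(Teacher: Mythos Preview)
Your overall strategy---recognizing Algorithms \ref{alg:a-w-Proposed} and \ref{alg:FLiP_linearized} as instances of the FLiP-ADMM iteration for the aggregated problem \eqref{prob:aggregated} and then localizing the step-size conditions---is exactly the paper's approach. However, there is a concrete error in your ``key computation.''

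You claim that $\BW^\top\BA^\top\BGamma\BA\BW$ is block-diagonal with $i$th diagonal block $\sum_{l\in\QiGactive}\gamma_\subl A_\subl^\top A_\subl$. This fails on two counts. First, the dimensions are wrong: $A_\subl^\top A_\subl\in\BR^{|\CCl|d\times|\CCl|d}$ and $|\CCl|$ varies with $l$, so the sum is not even well-defined, let alone a $d\times d$ block. Second, and more fundamentally, $\BW^\top\BA^\top\BGamma\BA\BW=\sum_{l\in\QGactive}\gamma_\subl W_\subl^\top A_\subl^\top A_\subl W_\subl$ is \emph{not} block-diagonal: its $(i,j)$ block equals $\sum_{l:\,i,j\in\CCl}\gamma_\subl[A_\subl^\top A_\subl]_{\pi_\subl(i),\pi_\subl(j)}$, which is generally nonzero whenever $i\neq j$ share a clique. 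The coupling introduced by $\BW$ does not simply vanish. The paper's remedy (Lemma \ref{lem:PQ}) is to bound $A_\subl^\top A_\subl\preceq\lambda_{\mathrm{max}}(A_\subl^\top A_\subl)I_{|\CCl|d}$; because $W_\subl^\top W_\subl$ \emph{is} block-diagonal, this yields $\BW^\top\BA^\top\BGamma\BA\BW\preceq\bdiag\big([\sum_{l\in\QiGactive}\gamma_\subl\lambda_{\mathrm{max}}(A_\subl^\top A_\subl)I_d]_{i\in\CN}\big)$, and then \eqref{stepsize_condition1} gives $\BP:=\BD_\alpha^{-1}-\BW^\top\BA^\top\BGamma\BA\BW\succeq\BL_{F^2}$. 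This is precisely why $\lambda_{\mathrm{max}}(A_\subl^\top A_\subl)$, rather than $A_\subl^\top A_\subl$ itself, appears in \eqref{stepsize_condition1}.

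A secondary point: the FLiP-ADMM theorem in \cite{Ryu2022-oc} is stated with scalar penalty and relaxation parameters, whereas here $\BGamma$ and $\BPhi$ are block-diagonal matrices with clique-dependent entries, and the Lipschitz constants $L_{f_i^2},L_{g_\subl^2}$ are block-wise. So you cannot invoke it as a black box; the paper reruns the Lyapunov argument (Lemmas \ref{lem:87}--\ref{lem:88}) with these matrix weights, which is the ``reproducing the one-iteration descent inequality'' you anticipated. Once the $\BP$-bound above is in place and the block-wise verification of $M_2\succeq0$ (Lemma \ref{lem:M2}) is carried out, the remainder of your outline goes through.
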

\begin{proof}
We prove Theorem \ref{thm:convergence} based on the idea of the proof of Theorem 6 in \cite{Ryu2022-oc} and the lemmas in Appendix \ref{subsec:supporting_lemma}.

First, we give preliminaries of the proof.
Let 
    $F(\Bx) = F^1(\Bx)+F^2(\Bx)$ and $G(\By) = G^1(\By)+G^2(\By)$,
where $F^1(\Bx)=\sum_{i\in\CN} f_i^1(x_i)$, $F^2(\Bx)=\sum_{i\in\CN} f_i^2(x_i)$, $G^1(\By)=\sum_{l\in\QGactive} g_\subl^1(y_\subl)$, and $G^2(\By)=\sum_{l\in\QGactive} g_\subl^2(y_\subl)$.
Now, we consider the following FLiP-ADMM algorithm:\begin{subequations}\label{alg:Full-FLiP}
\begin{align}
\label{alg:Bx_update_FLiP}
\small
 \Bx^{k+1} &\in \underset{\Bx}{\operatorname{argmin}}
 \{F^1(\Bx)+\langle \nabla F^2(\Bx^k) + \BW^\top\BA^\top\Bu^k,\Bx\rangle \nonumber  \\
&+\frac{1}{2}\|\BA\BW\Bx+\BB\By^k-\Bc\|^2_\BGamma+\frac{1}{2}\left\|\Bx-\Bx^k\right\|_\BP^2 \} \\
  \label{alg:By_update}
\By^{k+1}& \in \underset{\By}{\operatorname{argmin}}\{G^1(\By)
+\langle \nabla G^2(\By^k) + \BB^\top \Bu^k, \By\rangle \nonumber \\
&\!\!\!\!\!+\frac{1}{2}\|\BA\BW\Bx^{k+1}+\BB\By-\Bc\|^2_\BGamma+\frac{1}{2}\|\By-\By^k\|_\BQ^2\}  \\
\label{alg:Bu_update}
 \Bu^{k+1}&=\Bu^k+\BPhi \BGamma (\BA\BW\Bx^{k+1}+\BB\By^{k+1}-\Bc).
\end{align}
\end{subequations}
Now, suppose $\BP$ and $\BQ$ are given as
$\BP = \BD_\alpha^{-1}-\BW^\top\BA^\top\BGamma\BA\BW$ and $\BQ=\bdiag([Q_\subl]_{l\in\QGactive})=\BD_\beta^{-1}-\BB^\top\BGamma\BB$, respectively.
Then, \eqref{alg:Full-FLiP} is equivalent to Algorithm \ref{alg:FLiP_linearized}. 
Additionally, when $F^2=G^2=0$, \eqref{alg:Full-FLiP} is reduced to Algorithm \ref{alg:a-w-Proposed}.
Note that under Assumption \ref{assu:convergence}a--b, the $x_i$- and $y_\subl$- subproblems in \eqref{alg:xi-update}, \eqref{alg:xi_update_FLiP}, \eqref{alg:yl-update}, and \eqref{alg:yl-update_FLiP} are always well-defined (see Proposition 12.15 in \cite{Bauschke2011-pu}).
Moreover, let $\Bw^k:=[\Bx^{k\top},\By^{k\top},\Bu^{k\top}]^\top$ and $\Bw^*:=[\Bx^{*\top},\By^{*\top},\Bu^{*\top}]^\top$.
Furthermore, we define 
\begin{align}
&M_0 = \frac{1}{2}\bdiag(\BP,\,\BB^\top\BGamma\BB + \BQ ,\,\BGamma^{-1}\BPhi^{-1}),\nonumber\\
&M_1 = \frac{1}{2}\bdiag(0,\,\BL_{G^2}+\BQ,\, \BTheta(\BPhi^{-1})^2\BGamma^{-1}), \nonumber
\end{align}
\begin{align}
\label{def:M2}
&M_2  = \frac{1}{2}\bdiag (\BP-\BL_{F^2}, \nonumber\\
&\quad\quad\quad\quad  \BB^\top \BGamma(I-\BTheta^{-1}(I-\BPhi)^2)\BB +\BQ - 3\BL_{G^2}, \nonumber\\
& \quad\quad\quad\quad\quad\quad\quad (\BPhi^{-1})^2\BGamma^{-1}(2I-\BPhi-\Theta ) ),
\end{align}
where $\BL_{F^2}=\bdiag([L_{f^2_i}I_d ]_{i\in\CN})$, $\BL_{G^2}=\bdiag([L_{g^2_\subl}I_{q_\subl} ]_{l\in\QGactive})$, and $\BTheta=\bdiag([\theta_\subl I_{p_\subl} ])$ with $\theta_\subl = 2-\phi_\subl-\varepsilon_\subl >0,\,l\in\QGactive$.
Finally, we define the Lyapunov function $V^k$ as
\begin{equation}\label{Lyapunov}
    V^k = \|\Bw^k-\Bw^*\|^2_{M_0} + \|\Bw^k-\Bw^{k-1}\|^2_{M_1}.
\end{equation}

With this in mind, Theorem \ref{thm:convergence} is proved as follows.
By using the equality $\|\Bw^{k+1}-\Bw^*\|_{M_0}^2=\|\Bw^k-\Bw^*\|_{M_0}^2 - \|\Bw^{k+1}-\Bw^k\|_{M_0}^2 + 2 \langle \Bw^{k+1}-\Bw^k,\Bw^{k+1}-\Bw^* \rangle_{M_0}$, we have
\begin{align}\label{ineq:V-1}
&V^{k+1} = V^k - \|\Bw^k-\Bw^{k-1}\|^2_{M_1} + \|\Bw^{k+1}-\Bw^k\|_{M_1}^2 \\
 &- \|\Bw^{k+1}-\Bw^k\|_{M_0}^2 + 2 \langle \Bw^{k+1}-\Bw^k,\Bw^{k+1}-\Bw^* \rangle_{M_0} \nonumber
\end{align}
for $V^k$ in \eqref{Lyapunov}.
Then, adding \eqref{ineq:87} in Lemma \ref{lem:87} and \eqref{ineq:88} in Lemma \ref{lem:88} to \eqref{ineq:V-1}, we obtain 
\begin{align*}
\small
&V^{k+1} \leq V^k - \|\Bw^k-\Bw^{k-1}\|^2_{M_1} + \|\Bw^{k+1}-\Bw^k\|_{M_1}^2 \\
     &- \|\Bw^{k+1}-\Bw^k\|_{M_0}^2 + 2 \langle \Bw^{k+1}-\Bw^k,\Bw^{k+1}-\Bw^* \rangle_{M_0}  \\
     &+\textstyle\sum_{i\in\CN} \frac{L_{f^2_i}}{2}\|x_i^{k+1}-x_i^k\|^2 
+ \textstyle\sum_{l\in\QGactive} \frac{L_{g^2_\subl}}{2}\|y_\subl^{k+1}-y_\subl^k\|^2 \\
    &-2\langle \Bw^{k+1}-\Bw^k, \Bw^{k+1}-\Bw^* \rangle_{M_0}\\
    &+ \|\Bu^{k+1}-\Bu^k\|^2_{(I-\BPhi^{-1})\BPhi^{-1}\BGamma^{-1}}\\
    &+\frac{1}{2}\|\Bykp-\By^k\|^2_{\BL_{G^2} -\BQ + \BB^\top\BTheta^{-1}(I-\BPhi)^2 \BGamma\BB}\\
    &+\frac{1}{2}\|\By^k-\By^{k-1}\|^2_{\BL_{G^2}+\BQ} 
    + \frac{1}{2}\|\Bu^k-\Bu^{k-1}\|^2_{\BTheta\BGamma^{-1}(\BPhi^{-1})^2}\\
    &- \left( \CL(\Bxkp,\Bykp,\Bu^*) - \CL(\Bx^*,\By^*,\Bu^*) \right)\\
    &= V^k -\|\Bw^{k+1}-\Bw^k\|_{M_2}^2\\
    &\quad\quad- \left( \CL(\Bxkp,\Bykp,\Bu^*) - \CL(\Bx^*,\By^*,\Bu^*) \right),
\end{align*}
for any $\BTheta\succ0$.
Consequently, from this inequality, Lemma \ref{lem:PQ}, Lemma \ref{lem:M2}, and
$\CL(\Bxkp,\Bykp,\Bu^*) - \CL(\Bx^*,\By^*,\Bu^*)\geq0$,
we obtain $\lim_{k\to\infty}\|\Bw^{k+1}-\Bw^k\|_{M_2}^2=0$ and $\lim_{k\to \infty} (\CL(\Bxkp,\Bykp,\Bu^*) - \CL(\Bx^*,\By^*,\Bu^*))  = 0$.
Then, By $\|\Bw^{k+1}-\Bw^k\|_{M_2}^2\to 0$, we have $\lim_{k\to\infty}(\Bu^{k+1}-\Bu^k)= 0$
Therefore, $\lim_{k\to\infty}(F(\Bx^k)+G(\By^k))=(F(\Bx^*)+G(\By^*))$ and $\lim_{k\to\infty}(\BA\BW\Bx^k+\BB\By^k-\Bc)=0$ are achieved.
\end{proof}

Moreover, when both $\BA$ and $\BB$ are the identity matrix, 
we can prove the convergence with fully agent-wise algorithmic parameters and convergence conditions.
In this case, the agents can independently choose their algorithmic parameters without any cooperation.
\begin{theorem}\label{thm:convergence_localize}
Assume that $\BA=\BB=I$.
Consider that in Algorithms \ref{alg:a-w-Proposed} and \ref{alg:FLiP_linearized},
we replace $\gamma_\subl$, $\phi_\subl$, and $\beta_\subl$ by the diagonal matrices $\diag(\gamma_\CCl) \otimes I_d$ with $\gamma=[\gamma_1,\ldots,\gamma_n]^\top\in\BR^n$, $\diag(\phi_\CCl) \otimes I_d$ with $\phi=[\phi_1,\ldots,\phi_n]^\top\in\BR^n$, and $\diag(\beta_\CCl) \otimes I_d$ with $\beta=[\beta_1,\ldots,\beta_n]^\top\in\BR^n$, respectively.
Additionally, in \eqref{alg:yl-update} and \eqref{alg:yl-update_FLiP},
we replace $\prox_{\beta_\subl g_\subl}(\cdot)$ with $\prox_{g_\subl}^{(\diag(\beta_\CCl)\otimes I_d)^{-1}}(\cdot)$.
Assume that Assumption \ref{assu:convergence} is satisfied.
Assume that for all $i\in\CN$ and some $\varepsilon_i \in(0,2-\phi_i)$, the following inequalities are satisfied:
\begin{align*}
    &\alpha_i^{-1} \geq \gamma_i |\QiGactive| + L_{f_i},\quad \beta_i^{-1}-\gamma_i \geq 0, \quad \nonumber \\
    &\gamma_i\left(1-\frac{(1-\phi_i)^2}{2-\phi_i-\varepsilon_i}\right)I_d +Q_i \succeq 3
    \left(\max_{l\in\QiGactive}{L_{g^2_\subl}}\right)I_d,
\end{align*}
where $Q_i = (\beta_i^{-1} -\gamma_i)I_d$.
Then, $\lim_{k\to\infty}(F(\Bx^k)+G(\By^k))=(F(\Bx^*)+G(\By^*))$ and $\lim_{k\to\infty}(\BW\Bx^k-\By^k)=0$ hold.
\end{theorem}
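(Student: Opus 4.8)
The strategy is to obtain Theorem~\ref{thm:convergence_localize} as a corollary of the \emph{proof} of Theorem~\ref{thm:convergence}. The modified recursions of Algorithms~\ref{alg:a-w-Proposed} and \ref{alg:FLiP_linearized} are still instances of the aggregated FLiP-ADMM iteration \eqref{alg:Full-FLiP} (the weighted proximal map in \eqref{alg:By_update} being exactly the replacement $\prox_{g_\subl}^{(\diag(\beta_{\CCl})\otimes I_d)^{-1}}$), now with $\BA=\BB=I$ and the block-diagonal parameter matrices
\[
\BD_\alpha=\bdiag([\alpha_i I_d]_{i\in\CN}),\qquad \BGamma=\bdiag\bigl([\diag(\gamma_{\CCl})\otimes I_d]_{l\in\QGactive}\bigr),
\]
\[
\BD_\beta=\bdiag\bigl([\diag(\beta_{\CCl})\otimes I_d]_{l\in\QGactive}\bigr),\qquad \BPhi=\bdiag\bigl([\diag(\phi_{\CCl})\otimes I_d]_{l\in\QGactive}\bigr).
\]
Inspecting the proof of Theorem~\ref{thm:convergence}, the Lyapunov recursion \eqref{ineq:V-1}--\eqref{def:M2} together with Lemmas~\ref{lem:87}, \ref{lem:88}, \ref{lem:PQ}, \ref{lem:M2} uses only that $\BD_\alpha,\BD_\beta,\BGamma,\BPhi$ are symmetric positive definite, together with $\BP-\BL_{F^2}\succeq0$, $\BQ\succeq0$ and $M_2\succeq0$; no step relies on the clique blocks being scalar multiples of identities. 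Hence it suffices to re-establish these three matrix inequalities from the agent-wise hypotheses.

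First I would compute $\BP$ and $\BQ$ explicitly. Since the clique-$l$ block of $\BGamma$ places the weight $\gamma_i$ on the copy of $x_i$ inside $x_{\CCl}$, Proposition~\ref{lem:WTu} (applied with $v_\subl=(\diag(\gamma_{\CCl})\otimes I_d)\,x_{\CCl}$) gives $\BW^\top\BA^\top\BGamma\BA\BW=\BW^\top\BGamma\BW=\bdiag([\gamma_i|\QiGactive|\,I_d]_{i\in\CN})$. Therefore $\BP=\BD_\alpha^{-1}-\BW^\top\BGamma\BW$ is block-diagonal with $i$-th block $(\alpha_i^{-1}-\gamma_i|\QiGactive|)I_d$, and $\BQ=\BD_\beta^{-1}-\BGamma$ is block-diagonal over cliques with clique-$l$ block $\bdiag([(\beta_i^{-1}-\gamma_i)I_d]_{i\in\CCl})=\bdiag([Q_i]_{i\in\CCl})$ for $Q_i=(\beta_i^{-1}-\gamma_i)I_d$. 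The first agent-wise hypothesis $\alpha_i^{-1}\geq\gamma_i|\QiGactive|+L_{f_i}$ then yields $\BP-\BL_{F^2}\succeq0$, and the second, $\beta_i^{-1}-\gamma_i\geq0$, yields $\BQ\succeq0$.

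Next I would verify $M_2\succeq0$. Put $\BTheta=\bdiag([\diag(\theta_{\CCl})\otimes I_d]_{l\in\QGactive})$ with $\theta_i=2-\phi_i-\varepsilon_i>0$. Then $\BGamma,\BPhi,\BTheta,\BQ,\BL_{G^2}$ are simultaneously block-diagonal along cliques and, within each clique, along agents, so each of the three diagonal blocks of $M_2$ in \eqref{def:M2} splits agent-by-agent. The first block is $\tfrac12(\BP-\BL_{F^2})\succeq0$ by the previous step; the third block has agent-$i$ sub-blocks $\tfrac12\phi_i^{-2}\gamma_i^{-1}(2-\phi_i-\theta_i)I_d=\tfrac12\phi_i^{-2}\gamma_i^{-1}\varepsilon_i I_d\succ0$ since $\varepsilon_i\in(0,2-\phi_i)$; and the $i$-th sub-block of the middle block inside clique $l$ equals $\tfrac12(\kappa_i-3L_{g^2_\subl}I_d)$ with $\kappa_i:=\gamma_i\bigl(1-\tfrac{(1-\phi_i)^2}{2-\phi_i-\varepsilon_i}\bigr)I_d+Q_i$. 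Since $l\in\QiGactive$ implies $L_{g^2_\subl}\leq\max_{l'\in\QiGactive}L_{g^2_{l'}}$, the third agent-wise hypothesis $\kappa_i\succeq3(\max_{l'\in\QiGactive}L_{g^2_{l'}})I_d$ gives $\tfrac12(\kappa_i-3L_{g^2_\subl}I_d)\succeq0$, so $M_2\succeq0$. Feeding $\BP-\BL_{F^2}\succeq0$, $\BQ\succeq0$ (Lemma~\ref{lem:PQ}) and $M_2\succeq0$ (Lemma~\ref{lem:M2}) into the Lyapunov recursion exactly as in Theorem~\ref{thm:convergence} gives $\|\Bw^{k+1}-\Bw^k\|_{M_2}^2\to0$ and $\CL(\Bxkp,\Bykp,\Bu^*)-\CL(\Bx^*,\By^*,\Bu^*)\to0$, whence $\Bu^{k+1}-\Bu^k\to0$, and therefore $F(\Bx^k)+G(\By^k)\to F(\Bx^*)+G(\By^*)$ together with the vanishing feasibility residual, i.e., $\BW\Bx^k-\By^k\to0$.

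The main obstacle is not a single computation but making this reduction airtight. One must confirm that the per-iteration inequalities of Lemmas~\ref{lem:87} and \ref{lem:88} were proved for \emph{arbitrary} symmetric positive-definite (diagonal) parameter matrices rather than the clique-scalar ones --- they should be, since they invoke only convexity, the Lipschitz bounds of Assumption~\ref{assu:convergence}, and the optimality conditions of the $x_i$- and $y_\subl$-subproblems --- and, more delicately, that $\BW^\top\BGamma\BW$ is block-diagonal \emph{per agent}. The latter is precisely where $\BA=\BB=I$ and the ``agent-respecting'' construction of $\BGamma$ (the same weight $\gamma_i$ appearing in every clique block at the slot of agent $i$) are genuinely used: without that block structure the agent-by-agent splitting of $M_2$, and hence the fully localized parameter condition, would break down. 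Once these points are secured, the remainder is bookkeeping inherited verbatim from Theorem~\ref{thm:convergence}.
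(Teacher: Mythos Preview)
Your proposal is correct and follows essentially the same route as the paper, which simply states that the result is proved ``similarly to Theorem~\ref{thm:convergence} by modifying Lemma~\ref{lem:88}'' and omits the details. Your explicit computation of $\BW^\top\BGamma\BW$, the agent-wise verification of $\BP-\BL_{F^2}\succeq0$, $\BQ\succeq0$, and $M_2\succeq0$, and your identification of the one nontrivial point---that the Young-inequality step in Lemma~\ref{lem:88} must now be carried out coordinate-wise with an agent-indexed $\theta_i$ rather than a single clique-wise scalar $\theta_\subl$---constitute exactly the ``modification'' the paper alludes to.
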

\begin{proof}
    We can prove Theorem \ref{thm:convergence_localize} similarly to Theorem \ref{thm:convergence} by modifying Lemma \ref{lem:88} in the proof of Theorem \ref{thm:convergence}.
    Due to the space limit, we omit the proof.
\end{proof}

\section{Application to Consensus Optimization}\label{sec:case_study}
\subsection{Problem Setting}\label{subsec:consensus}

Now, we consider the following problem:
\begin{align}\label{prob:consensus}
\begin{array}{ll}
\underset{x_1\,\ldots,x_n\in\BR^d}{\mbox{minimize}}&
\displaystyle \
\sum_{i\in\CN} ( f_i(x_i) + h_i(x_i))
\\
\displaystyle\mbox{subject to}& x_i=x_j \quad \forall \{i,j\} \in \CE,
\end{array}
\end{align}
where $f_i$ and $h_i$ for $i\in\CN$ are proper, closed, and convex.
The problem in \eqref{prob:consensus} can be reformulated in the form of Problem \eqref{prob} with $A_\subl=-B_\subl=I_{|\CCl| d}$, $c_\subl=0$, and
\begin{equation}\label{eq:consensus_proximable}
    g_\subl(y_\subl) =  \sum_{j\in\CCl} \frac{1}{|\QjGactive|} h_j([y_\subl]_{\pi_\subl(j)})
+ \delta_{\CD_l}(y_\subl),
\end{equation}
where $\CD_l := \{y_\subl \in \BR^{|\CCl|d}
: \exists \xi\in\BR^d \text{ s.t. } y_\subl = \mathbf{1}_{|\CCl|}\otimes\xi   \}$ for $l\in\QGactive$, and $\delta_{\CD_l}(y_\subl)$ is the indicator function for $\CD_l$, i.e., $\delta_{\CD_l}$ satisfies $\delta_{\CD_l}(y_\subl) = 0$ for $y_\subl\in\CD_l$ and $\delta_{\CD_l}(y_\subl) = \infty$ for $y_\subl\notin\CD_l$.

For the $g_\subl$ in \eqref{eq:consensus_proximable} and $\CD_l$, the following proposition states that the proximal mapping of $g_\subl$ in \eqref{eq:consensus_proximable} is proximable if that of $(\sum_{j\in\CCl} \frac{1}{|\QjGactive|} h_j)(\cdot)$ is proximable.
\begin{prop}\label{lem:consensus_proximable}
    Assume that the functions $r_j:\BR^d\to\BR,\,j\in\CCl$ are proper, closed, and convex functions.
    Let $\bar{r}_\subl:\BR^d\to\BR\cup\{+ \infty\}$ and $s_\subl:\BR^{|\CCl|d}\to\BR\cup\{+ \infty\}$
    be $\bar{r}_\subl(z)=\sum_{j\in\CCl} r_j(z)$ for $z\in\BR^d$ and 
    \begin{equation*}
        s_\subl (x_\CCl) =\sum_{j\in\CCl} r_j(x_j) + \delta_{\CD_l}(x_\CCl),
    \end{equation*}
    respectively. Then, 
    \begin{align*}
      \prox_{s_\subl} (x_\CCl) = \mathbf{1}_{|\CCl|}\otimes \prox_{\frac{1}{|\CCl|}\bar{r}_\subl} \left( \frac{1}{|\CCl|} (\mathbf{1}_{|\CCl|}\otimes I_d)^\top x_\CCl \right)
    \end{align*}
    holds for $\bar{r}_\subl$ and any $x_{\CCl}\in\BR^{|\CCl|d}$.

\end{prop}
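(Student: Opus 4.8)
The plan is to work directly from the definition of the proximal mapping and to use the indicator function $\delta_{\CD_l}$ to collapse the $|\CCl|d$-dimensional minimization to a $d$-dimensional one. By definition,
\[
\prox_{s_\subl}(x_\CCl) = \argmin_{z\in\BR^{|\CCl|d}} \left\{ \sum_{j\in\CCl} r_j(z_j) + \delta_{\CD_l}(z) + \tfrac12\|z - x_\CCl\|^2 \right\},
\]
where $z$ is partitioned into blocks $z_j\in\BR^d$, $j\in\CCl$. The first step is to observe that any candidate minimizer must lie in $\CD_l$: since $\CD_l\neq\emptyset$ and each $r_j$ is finite-valued, the infimum over $\CD_l$ is finite, so points outside $\CD_l$ (where the objective is $+\infty$) are ruled out. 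Hence we may restrict to $z=\mathbf{1}_{|\CCl|}\otimes\zeta$ with $\zeta\in\BR^d$, on which $\sum_{j\in\CCl} r_j(z_j)=\bar r_\subl(\zeta)$ and $\tfrac12\|z-x_\CCl\|^2 = \tfrac12\sum_{j\in\CCl}\|\zeta-x_j\|^2$.

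The key algebraic step is to complete the square in $\zeta$. Setting $\bar x := \tfrac{1}{|\CCl|}\sum_{j\in\CCl} x_j = \tfrac{1}{|\CCl|}(\mathbf{1}_{|\CCl|}\otimes I_d)^\top x_\CCl$, one has $\tfrac12\sum_{j\in\CCl}\|\zeta-x_j\|^2 = \tfrac{|\CCl|}{2}\|\zeta-\bar x\|^2 + \text{const}$, with the constant independent of $\zeta$. The reduced problem is therefore $\argmin_\zeta\{\bar r_\subl(\zeta) + \tfrac{|\CCl|}{2}\|\zeta-\bar x\|^2\}$, and dividing the objective by $|\CCl|$ identifies its unique minimizer as $\prox_{\frac{1}{|\CCl|}\bar r_\subl}(\bar x)$. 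Substituting this back through $z=\mathbf{1}_{|\CCl|}\otimes\zeta$ gives $\prox_{s_\subl}(x_\CCl)=\mathbf{1}_{|\CCl|}\otimes\prox_{\frac{1}{|\CCl|}\bar r_\subl}(\bar x)$, which is the claimed identity.

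For rigor I would also record the well-definedness facts implicit in the notation: $\bar r_\subl$ is proper, closed, and convex as a finite sum of finite-valued proper closed convex functions, so $\frac{1}{|\CCl|}\bar r_\subl + \tfrac12\|\cdot-\bar x\|^2$ is strongly convex with a unique minimizer, and $s_\subl$ is proper, closed, and convex with $\CD_l$ nonempty, so $\prox_{s_\subl}$ is single-valued. I do not anticipate a genuine obstacle: the only points requiring a little care are the reduction to $\CD_l$ and the bookkeeping of the Kronecker-product notation — in particular recognizing $(\mathbf{1}_{|\CCl|}\otimes I_d)^\top x_\CCl = \sum_{j\in\CCl} x_j$ — while the square-completion itself is routine.
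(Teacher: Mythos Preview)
Your proposal is correct and follows essentially the same approach as the paper: both reduce the minimization to $\CD_l$ via the indicator, rewrite the problem as $\argmin_{\zeta\in\BR^d}\{\bar r_\subl(\zeta)+\tfrac12\sum_{j\in\CCl}\|x_j-\zeta\|^2\}$, and identify the minimizer as $\prox_{\frac{1}{|\CCl|}\bar r_\subl}(\bar x)$ with $\bar x=\tfrac{1}{|\CCl|}\sum_{j\in\CCl}x_j$. The only cosmetic difference is that you complete the square to reach the prox directly, whereas the paper writes the first-order optimality condition via the subdifferential and invokes the resolvent characterization of the proximal operator.
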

\begin{proof}
    From the definition of $\CD_l$, we have $\prox_{s_\subl}(x_\CCl) = \argmin_{v\in\BR^{|\CCl|d}} \{ \frac{1}{2}\|x_\CCl-z\|^2 + s_\subl(v)\} = \argmin_{\xi\in\BR^d}\{ \sum_{j\in\CCl}  \frac{1}{2}\|x_j - \xi \|^2 + \bar{r}_\subl(\xi) \}$. Then, we obtain
    $\xi^* \in \argmin_{\xi\in\BR^d}\{ \sum_{j\in\CCl}  \frac{1}{2}\|x_j - \xi \|^2 + \bar{r}_\subl(\xi) \}  \Leftrightarrow 0 \in |\CCl|\{ \xi^* - \frac{1}{|\CCl|} \sum_{j\in\CCl}x_j  + \frac{1}{|\CCl|}  \pd \bar{r}_\subl(\xi^*) \}\Leftrightarrow \frac{1}{|\CCl|} \sum_{j\in\CCl} x_j \in (\mathbb{I} +\frac{1}{|\CCl|}  \pd \bar{r}_\subl) (\xi^*) \Leftrightarrow \xi^* \in (\mathbb{I} +\frac{1}{|\CCl|}  \pd \bar{r}_\subl)^{-1} ( \frac{1}{|\CCl|} \sum_{j\in\CCl} x_j )$,
    where $\mathbb{I}$ is the identity operator.
    Therefore, Proposition \ref{lem:consensus_proximable} follows from Proposition 16.44 in \cite{Bauschke2011-pu}.
\end{proof}

\subsection{Numerical Experiment}\label{subsubsec:experiment_consensus}
Through a numerical experiment,
we demonstrate the effectiveness of the proposed CL-ADMM and CL-FLiP-ADMM.

Consider a multi-agent system with $n=50$ agents.
Assume that the communication network $\CG$ is given as a connected time-invariant undirected graph, where each edge is generated with a probability of $0.1$.
We consider that the multi-agent system solves the consensus optimization problem in \eqref{prob:consensus} with 
\begin{align}\label{eq:consensus_objective}
    f_i(x_i) = \frac{1}{2}\|\Psi_i x_i - b_i\|^2,\quad h_i(x_i) = \lambda_i \|x_i\|_1,
\end{align}
where $\Psi_i = I_d + 0.1 \Omega_i \in \BR^{d\times d}$, $b_i\in\BR^d,\,i\in\CN$, and $\lambda_i=\lambda=0.001$ for all $i\in\CN$.
For all $i\in\CN$, each entry of $\Omega_i$ and $b_i$ is generated by the standard normal distribution.
Note that $f_i$ in \eqref{eq:consensus_objective} is $\lambda_\mathrm{max}(\Psi_i^\top \Psi_i)$-smooth, and the functions $g_\subl$ in \eqref{eq:consensus_proximable} is proximable for $h_i$ in \eqref{eq:consensus_objective}.

Now, to verify the effectiveness of the proposed clique-based algorithms, we conduct simulations for 
the CL-ADMM with $\QGactive=\maxQG$, CL-ADMM with $\QGactive = \CE$, CL-FLiP-ADMM with $\QGactive=\maxQG$, CL-FLiP-ADMM with $\QGactive=\CE$, and PG-EXTRA \cite{Shi2015-bm}, given as follows: \begin{align*}
        &\Bx^{k+1} = \prox_{\eta \lambda\|\cdot\|_1}
        ( (W_\mathrm{m}\otimes I_d) \Bx^k - \eta  \nabla F(\Bx) - \Bv^k)\\
        &\Bv^{k+1} = \Bv^k + \frac{I_{nd} - W_\mathrm{m}\otimes I_d}{2} \Bx^k,
    \end{align*} 
    where $W_\mathrm{m}\in\BR^{n\times n}$ is a mixing matrix of $\CG$.
Note that when $\CG$ is connected, $\cap_{l\in\QGactive} \{\Bx\in\BRnd: x_\CCl \in\CD_l \} = \{\Bx\in\BRnd: x_1=\cdots=x_n\}$ is satisfied for $\QGactive=\maxQG$ from Proposition 4.2 in \cite{Sakurama2021-oy}, and hence an optimal solution can be obtained by the CL-ADMM and CL-FLiP-ADMM with $\QGactive=\maxQG$.

The algorithmic parameters are given as follows.
For the CL-ADMM algorithms with $\QGactive=\maxQG$ and $\CE$, we set $\gamma_\subl = \phi_\subl=\beta_\subl=1$ for all $l\in\QGactive$, and $\alpha_i = 1/|\QiGactive|$ for each $i\in\CN$.
For the CL-FLiP-ADMM algorithms with $\QGactive=\maxQG$ and $\CE$, we set $f_i^1(x_i)=0$, $f_i^2(x_i)=f_i(x_i)$, $g_\subl^1(y_\subl)=g_\subl(y_\subl)$, $g_\subl^2(y_\subl)=0$, $\gamma_\subl = \phi_\subl=\beta_\subl=  1$ for all $l\in\QGactive$, and $\alpha_i = 1/ (\lambda_\mathrm{max}(\Psi_i^\top \Psi_i) + |\QiGactive|)$ for each $i\in\CN$.
For the PG-EXTRA, we set $\eta = 0.9(1 + \lambda_\mathrm{min}(W_\mathrm{m}))/ \lambda_\mathrm{max}(\mathbf{\Psi}^\top \mathbf{\Psi})$ with $\mathbf{\Psi}= \bdiag(\Psi_1,\ldots,\Psi_n)$
and 
$W_\mathrm{m} = I_n -  \frac{1}{\max_{i\in\CN} |\CN_i|-1 } L_\CG $, where $L_\CG$ is the graph laplacian matrix of the graph $\CG$.

\begin{figure}[t]
    \centering
    \includegraphics[width=\columnwidth]{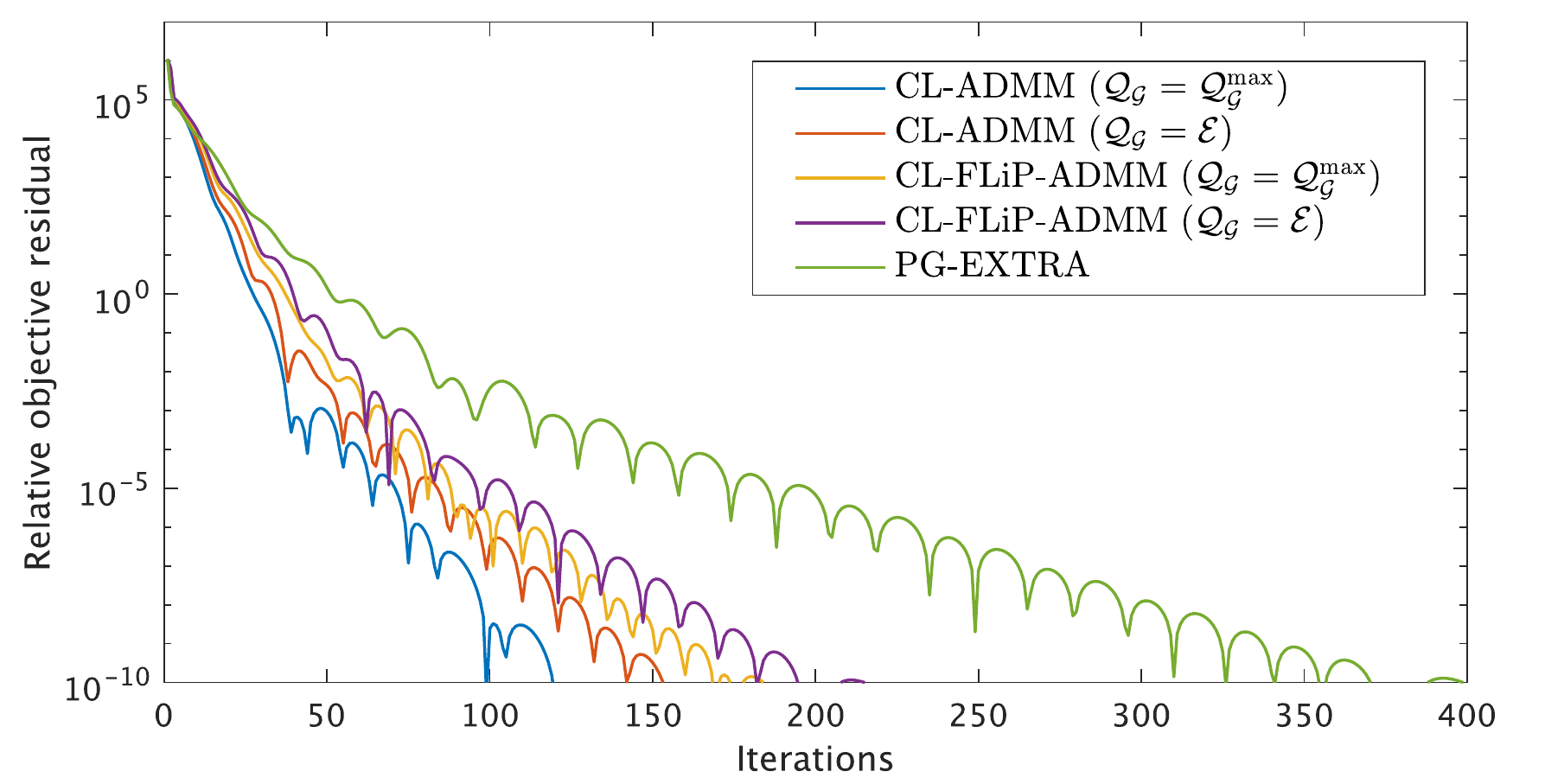}
    \caption{Plot of relative objective residuals $|(F(\Bx^k)+G(\Bx^k) ) -(F(\Bx^*)-G(\Bx^*))|/(F(\Bx^*)+G(\Bx^*))$ against the number of iteration under the CL-ADMM and CL-FLiP-ADMM algorithms with $\QGactive=\maxQG$ and $\CE$, and PG-EXTRA.}
    \label{fig:sim_result_consensus}
\end{figure}
\begin{figure}[t]
    \centering
    \hspace*{-0.2cm}\includegraphics[width=0.98\columnwidth]{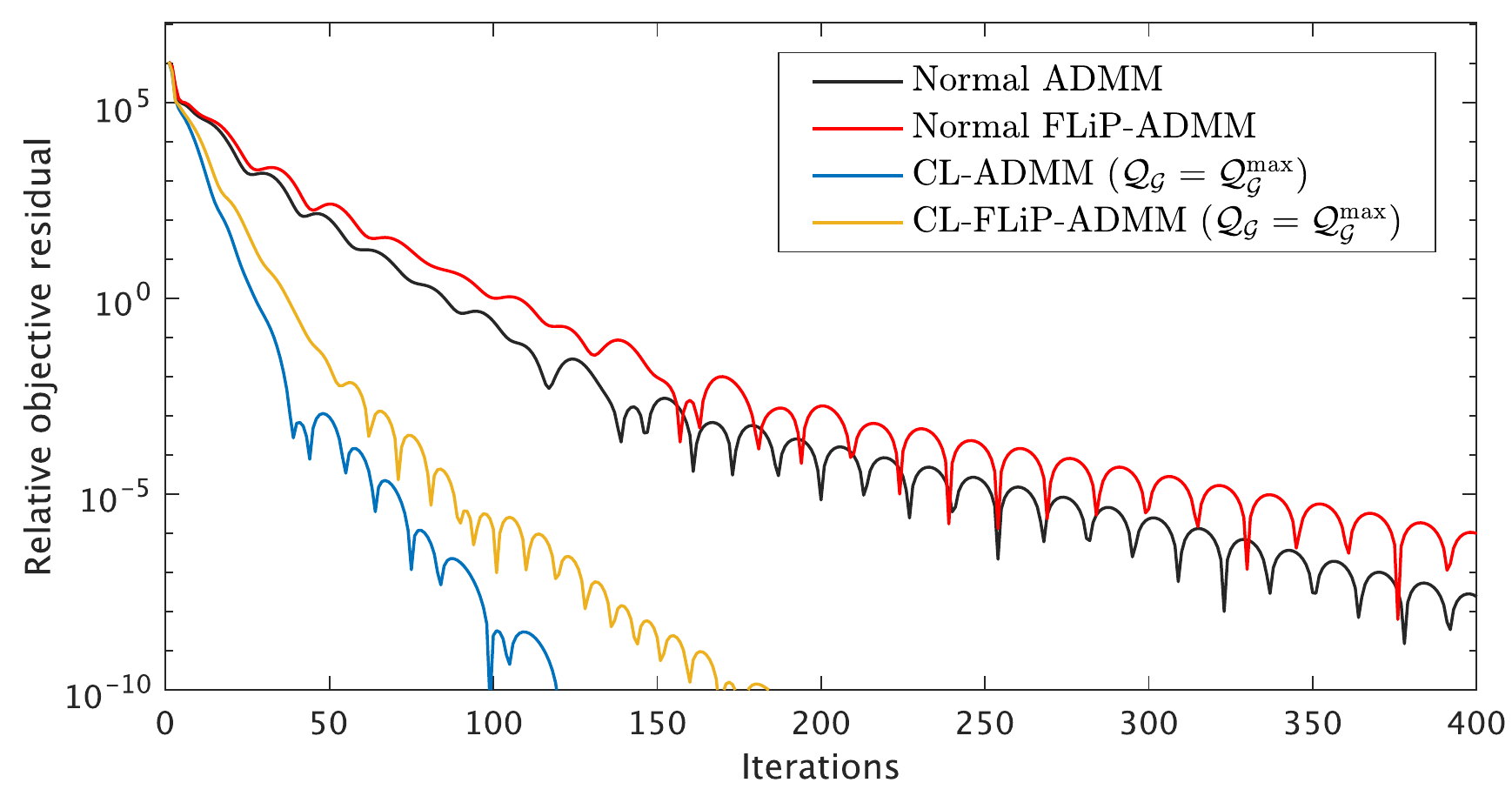}
    \caption{Plot of relative objective residuals $|(F(\Bx^k)+G(\Bx^k) ) -(F(\Bx^*)+G(\Bx^*))|/(F(\Bx^*)+G(\Bx^*))$ against the number of iteration under the normal ADMM, normal FLiP-ADMM, and the CL-ADMM and CL-FLiP-ADMM algorithms with $\QGactive=\maxQG$.}
    \label{fig:sim_result_consensus2}
\end{figure}

Fig. \ref{fig:sim_result_consensus} plots the relative objective residuals $|(F(\Bx^k)+G(\Bx^k) ) -(F(\Bx^*)+G(\Bx^*))|/(F(\Bx^*)+G(\Bx^*))$ versus iterations.
As shown in Fig. \ref{fig:sim_result_consensus}, all the methods successfully converge to an optimal solution with a tiny error, and the CL-ADMM with $\QGactive=\maxQG$ outpaces the others.
Additionally, although the CL-FLiP-ADMM with $\QGactive=\maxQG$ is slower than the CL-ADMM algorithms, it outperforms the CL-FLiP-ADMM with $\QGactive=\CE$ and PG-EXTRA.
Moreover, Fig. \ref{fig:sim_result_consensus} indicates that handling the consensus constraint on a clique basis, i.e., setting not $\QGactive=\CE$ but $\QGactive=\maxQG$ can enhance the performance.

For further comparison, we also run the normal ADMM and FLiP-ADMM algorithms on the same problem, and Fig. \ref{fig:sim_result_consensus2} plots the relative objective residual versus iterations of the normal ADMM and normal FLiP-ADMM with the results of the CL-ADMM and CL-FLiP-ADMM with $\QGactive=\maxQG$ in Fig. \ref{fig:sim_result_consensus}.
Here, the normal ADMM and FLiP-ADMM correspond to special cases of Algorithms \ref{alg:a-w-Proposed} and \ref{alg:FLiP_linearized} in which all the parameters $\gamma_\subl$, $\phi_\subl$, $\beta_\subl$, and $\alpha_i$ are common among all $l\in\QGactive$ and all $i\in\CN$.
For the normal ADMM, we set $\QGactive=\maxQG$, $\gamma_\subl = \phi_\subl=\beta_\subl=1$ for all $l\in\QGactive$, and $\alpha_i=1/\max_{i\in\mathcal{N}}|\QiGactive|$ for all $i\in\CN$.
For the normal FLiP-ADMM, we set $\QGactive=\maxQG$, $\gamma_\subl = \phi_\subl=\beta_\subl=  1$ for all $l\in\QGactive$, and $\alpha_i=1/ \max_{i\in\mathcal{N}} (\lambda_\mathrm{max}(\Psi_i^\top \Psi_i ) + |\QiGactive|) $ for all $i\in\CN$.
From Fig. \ref{fig:sim_result_consensus2}, the normal ADMM and normal FLiP-ADMM are much slower than the CL-ADMM and CL-FLiP-ADMM.
This implies that thanks to the localized algorithmic parameters, the proposed methods are not only fully distributed but perform better than the normal ADMM and FLiP-ADMM.

These results highlight the effectiveness of the proposed methods.
\begin{rem}
The clique-wise handling of pairwise constraints (e.g., the consensus constraint in \eqref{prob:consensus}) tends to outperform the pairwise one, in particular, when $G$ is not dense, and the initial value of $\Bx^k$ is far from its optimal value.
When the clique-wise handling does not outperform the edge-based one, the clique-wise coupled framework is still meaningful because 
the agents can share a part of their computation (see Remark \ref{rem:share}).
\end{rem}

\section{Conclusion}\label{sec:conclusion}
    This paper addressed a novel framework for distributed optimization, clique-wise coupled optimization problems.
    We proposed a distributed ADMM and FLiP-ADMM algorithms based on cliques
    and proved the convergence theorems with no global parameter.
    Moreover, we applied the proposed methods to a consensus optimization problem and demonstrated their effectiveness via numerical experiments.
    A future direction is to extend existing methods for pairwise coupled distributed optimization/control problems to a more general framework from the viewpoint of clique-wise coupling.

\appendix

\subsection{Supporting Lemmas}\label{subsec:supporting_lemma}
In the following, several supporting lemmas for Theorem \ref{thm:convergence} are provided. Here, let $\hat{\Bu}^{k+1}$ be 
\begin{equation}\label{def:uhat}
    \hat{\Bu}^{k+1}=\Bu^k + \BGamma(\BA\BW \Bxkp + \BB\Bykp - \Bc).
\end{equation}
\begin{lemma}\label{lem:PQ}
Assume that the inequalities in \eqref{stepsize_condition} are satisfied.
Then, $\BP\succeq0$ and $\BQ\succeq 0$ hold for 
$\BP = \BD_\alpha^{-1}-\BW^\top\BA^\top\BGamma\BA\BW $ and $\BQ=\BD_\beta^{-1}-\BB^\top\BGamma\BB$.
\end{lemma}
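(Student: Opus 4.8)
The plan is to prove the two semidefiniteness claims separately, and the key realization is that $\BP$ and $\BQ$ are block-diagonal with respect to the natural partitions (agents for $\BP$, cliques for $\BQ$), so it suffices to check positive semidefiniteness block by block. First I would address $\BQ = \BD_\beta^{-1} - \BB^\top \BGamma \BB$. Since $\BD_\beta$, $\BB$, and $\BGamma$ are all block-diagonal over $l\in\QGactive$, we have $\BQ = \bdiag([Q_\subl]_{l\in\QGactive})$ with $Q_\subl = \beta_\subl^{-1} I_{q_\subl} - \gamma_\subl B_\subl^\top B_\subl$. Then $\BQ\succeq 0$ is equivalent to $Q_\subl\succeq 0$ for every $l$, i.e. $\beta_\subl^{-1} I_{q_\subl} \succeq \gamma_\subl B_\subl^\top B_\subl$, which follows immediately from \eqref{stepsize_condition2} because $\gamma_\subl B_\subl^\top B_\subl \preceq \gamma_\subl \lambda_\mathrm{max}(B_\subl^\top B_\subl) I_{q_\subl} \preceq \beta_\subl^{-1} I_{q_\subl}$.

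Next I would handle $\BP = \BD_\alpha^{-1} - \BW^\top \BA^\top \BGamma \BA \BW$. The subtlety here is that $\BW$ mixes agent blocks, so the middle term $\BW^\top \BA^\top \BGamma \BA \BW$ is not obviously block-diagonal over agents. The plan is to compute its $i$th diagonal block explicitly using the structure of $\BW$ from Proposition \ref{lem:WTu} (or directly from \eqref{def:w_ij}) together with the block-diagonality of $\BA = \bdiag([A_\subl])$ and $\BGamma = \bdiag([\gamma_\subl I_{p_\subl}])$. Writing $\BA^\top\BGamma\BA = \bdiag([\gamma_\subl A_\subl^\top A_\subl]_{l\in\QGactive})$ and using that $W_\subl$ selects the rows indexed by $\CCl$ (with $[W_\subl]_{ji} = w_{\subl,ji} I_d$), one finds that the $(i,i')$ block of $\BW^\top\BA^\top\BGamma\BA\BW$ involves only cliques containing both $i$ and $i'$; in particular the $(i,i)$ block is $\sum_{l\in\QiGactive}\gamma_\subl [A_\subl^\top A_\subl]_{\pi_\subl(i),\pi_\subl(i)}$, which is a principal submatrix of $\gamma_\subl A_\subl^\top A_\subl$ and hence satisfies $[A_\subl^\top A_\subl]_{\pi_\subl(i),\pi_\subl(i)} \preceq \lambda_\mathrm{max}(A_\subl^\top A_\subl) I_d$. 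The off-diagonal blocks do not vanish in general, so I cannot reduce to a purely block-diagonal comparison; instead the clean way is a congruence/Schur-type argument: write $\BW^\top\BA^\top\BGamma\BA\BW = (\BGamma^{1/2}\BA\BW)^\top(\BGamma^{1/2}\BA\BW)$ and bound $\BW^\top\BA^\top\BGamma\BA\BW \preceq \bdiag([\,\sum_{l\in\QiGactive}\gamma_\subl\lambda_\mathrm{max}(A_\subl^\top A_\subl) I_d\,]_{i\in\CN})$. To justify this last bound rigorously I would use the identity $\BA\BW = \bdiag([A_\subl]) \BW$ and note $\|\BGamma^{1/2}\BA\BW \Bx\|^2 = \sum_{l\in\QGactive}\gamma_\subl\|A_\subl x_\CCl\|^2 \le \sum_{l\in\QGactive}\gamma_\subl\lambda_\mathrm{max}(A_\subl^\top A_\subl)\|x_\CCl\|^2 = \sum_{l\in\QGactive}\gamma_\subl\lambda_\mathrm{max}(A_\subl^\top A_\subl)\sum_{i\in\CCl}\|x_i\|^2 = \sum_{i\in\CN}\bigl(\sum_{l\in\QiGactive}\gamma_\subl\lambda_\mathrm{max}(A_\subl^\top A_\subl)\bigr)\|x_i\|^2$, where the last step just swaps the order of summation using $i\in\CCl \iff l\in\QiGactive$. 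Combined with \eqref{stepsize_condition1}, which gives $\alpha_i^{-1} \ge \sum_{l\in\QiGactive}\gamma_\subl\lambda_\mathrm{max}(A_\subl^\top A_\subl) + L_{f_i^2} \ge \sum_{l\in\QiGactive}\gamma_\subl\lambda_\mathrm{max}(A_\subl^\top A_\subl)$, this yields $\langle \Bx, \BP\Bx\rangle = \sum_i \alpha_i^{-1}\|x_i\|^2 - \|\BGamma^{1/2}\BA\BW\Bx\|^2 \ge 0$ for all $\Bx$, i.e. $\BP\succeq 0$.

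The main obstacle is the non-block-diagonal structure of $\BW^\top\BA^\top\BGamma\BA\BW$: one must resist the temptation to argue blockwise and instead pass through the quadratic-form computation above, carefully tracking the change in order of summation between "sum over cliques, then over agents in each clique" and "sum over agents, then over cliques containing each agent" — this is exactly the bookkeeping that \eqref{eq:neighbor_clique} and the definition of $\QiGactive$ make legitimate. Everything else is routine: the $\lambda_\mathrm{max}$ bounds are standard, and the final assembly is just matching terms with \eqref{stepsize_condition1}–\eqref{stepsize_condition2}. I would present the $\BQ$ part first (short, blockwise) and then the $\BP$ part via the quadratic form, noting that the $L_{f_i^2}$ slack in \eqref{stepsize_condition1} is not needed for this particular lemma but is harmless.
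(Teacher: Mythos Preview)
Your proposal is correct and follows essentially the same route as the paper. The paper's proof introduces the auxiliary block-diagonal matrices $\hat{D}_\subl = \gamma_\subl \lambda_\mathrm{max}(A_\subl^\top A_\subl) I_{|\CCl|d}$, bounds $\BW^\top\BA^\top\BGamma\BA\BW \preceq \sum_{l\in\QGactive} W_\subl^\top \hat{D}_\subl W_\subl$, and then checks via \eqref{def:w_ij} that $\sum_{l} W_\subl^\top \hat{D}_\subl W_\subl$ is block-diagonal over agents with $(i,i)$ block $\sum_{l\in\QiGactive}\gamma_\subl \lambda_\mathrm{max}(A_\subl^\top A_\subl) I_d$; your quadratic-form computation $\|\BGamma^{1/2}\BA\BW\Bx\|^2 \le \sum_i(\sum_{l\in\QiGactive}\gamma_\subl\lambda_\mathrm{max}(A_\subl^\top A_\subl))\|x_i\|^2$ is exactly the same inequality written at the level of vectors, with the swap of summation $\sum_{l}\sum_{i\in\CCl} = \sum_i\sum_{l\in\QiGactive}$ playing the role of the block-diagonality computation.
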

\begin{proof}
    $\BQ\succeq 0$ directly follows from \eqref{stepsize_condition2}.
    In the following, we prove $\BP\succeq 0$.
    Let $\hat{D}_\subl \in \BR^{|\CCl|d\times |\CCl|d}$ be
    $\hat{D}_\subl =  \gamma_\subl \lambda_\mathrm{max}(A_\subl^\top A_\subl) I_{|\CCl|d}$.
    Then, we obtain
    \begin{align*}
    \small
       \BP =& \BD_\alpha^{-1} - \textstyle\sum_{l\in\QGactive} W_\subl^\top \hat{D}_\subl W_\subl + \sum_{l\in\QGactive} W_\subl^\top \hat{D}_\subl W_\subl \\
            &- \textstyle\sum_{l\in\QGactive} \gamma_\subl W_\subl^\top A_\subl^\top A_\subl W_\subl \succeq \BD_\alpha^{-1} - \sum_{l\in\QGactive} W_\subl^\top \hat{D}_\subl W_\subl. 
    \end{align*}
    Now, for the $(i,j)$ block $[W_\subl^\top \hat{D}_\subl W_\subl]_{ij}\in\BR^{d\times d}$ of $W_\subl^\top \hat{D}_\subl W_\subl$,
    \begin{equation*}
    \small
        [W_\subl^\top \hat{D}_\subl W_\subl]_{ij} = 
        \begin{cases}
            \gamma_\subl \lambda_\mathrm{max}(A_\subl^\top A_\subl) I_d   & i=j \text{ and } i,j\in\CCl \\
            0    & \text{otherwise}
        \end{cases}
    \end{equation*}
    is obtained from \eqref{def:w_ij}.
    Therefore, $[\sum_{l\in\QGactive} W_\subl^\top \hat{D}_\subl W_\subl]_{ii} = \sum_{l\in\QiGactive} \gamma_\subl \lambda_\mathrm{max}(A_\subl^\top A_\subl)I_d  $ and $[\sum_{l\in\QGactive} W_\subl^\top \hat{D}_\subl W_\subl]_{ij} = 0$ for $i\neq j$
    hold.
    Hence, we obtain $\BP\succeq \BD_\alpha^{-1} - \sum_{l\in\QGactive} W_\subl^\top \hat{D}_\subl W_\subl\succeq 0$ from \eqref{stepsize_condition1}.
\end{proof}
\begin{lemma}\label{lem:M2}
If there exists $\varepsilon_\subl \in(0,2-\phi_\subl)$ for all $l\in\QGactive$ such that the inequalities in \eqref{stepsize_condition} is satisfied, then $M_2\succeq0$ and $M_2\neq0$ are satisfied for $M_2$ in \eqref{def:M2}, $\BP = \BD_\alpha^{-1}-\BW^\top\BA^\top\BGamma\BA\BW $, and $\BQ=\BD_\beta^{-1}-\BB^\top\BGamma\BB$.
\end{lemma}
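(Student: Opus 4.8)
The plan is to show $M_2 \succeq 0$ by proving each of its three diagonal blocks is positive semidefinite, and then to argue $M_2 \neq 0$ by exhibiting a nonzero block. Since $M_2$ is block-diagonal, $M_2 \succeq 0$ holds if and only if each block is PSD.

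First I would handle the $\Bx$-block $\frac{1}{2}(\BP - \BL_{F^2})$. By Lemma \ref{lem:PQ} together with the argument there, $\BP \succeq \BD_\alpha^{-1} - \sum_{l\in\QGactive} W_\subl^\top \hat D_\subl W_\subl$, whose $i$th diagonal block equals $(\alpha_i^{-1} - \sum_{l\in\QiGactive}\gamma_\subl\lambda_\mathrm{max}(A_\subl^\top A_\subl))I_d$ and whose off-diagonal blocks vanish. Hence the $i$th block of $\BP - \BL_{F^2}$ is bounded below by $(\alpha_i^{-1} - \sum_{l\in\QiGactive}\gamma_\subl\lambda_\mathrm{max}(A_\subl^\top A_\subl) - L_{f_i^2})I_d$, which is $\succeq 0$ exactly by \eqref{stepsize_condition1}. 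This gives $\BP - \BL_{F^2} \succeq 0$.

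Next I would treat the $\Bu$-block $\frac{1}{2}(\BPhi^{-1})^2\BGamma^{-1}(2I - \BPhi - \BTheta)$. This is block-diagonal over $l\in\QGactive$ with $l$th block $\frac{1}{2}\phi_\subl^{-2}\gamma_\subl^{-1}(2 - \phi_\subl - \theta_\subl)I_{p_\subl}$; since $\theta_\subl = 2 - \phi_\subl - \varepsilon_\subl$, the scalar factor is $\frac{1}{2}\phi_\subl^{-2}\gamma_\subl^{-1}\varepsilon_\subl > 0$ because $\varepsilon_\subl \in (0, 2-\phi_\subl)$ and $\gamma_\subl,\phi_\subl>0$. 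Thus this block is in fact positive definite, which simultaneously settles $M_2 \neq 0$. For the middle $\By$-block $\frac{1}{2}(\BB^\top\BGamma(I - \BTheta^{-1}(I-\BPhi)^2)\BB + \BQ - 3\BL_{G^2})$, I would work block-by-block in $l$: using $\BQ = \bdiag([Q_\subl])$ and $\theta_\subl^{-1}(1-\phi_\subl)^2 = \frac{(1-\phi_\subl)^2}{2-\phi_\subl-\varepsilon_\subl}$, the $l$th block is $\frac{1}{2}(\gamma_\subl(1 - \frac{(1-\phi_\subl)^2}{2-\phi_\subl-\varepsilon_\subl})B_\subl^\top B_\subl + Q_\subl - 3L_{g_\subl^2}I_{q_\subl})$, which is PSD precisely by \eqref{stepsize_condition3}.

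The step I expect to require the most care is the bookkeeping in the $\By$-block: one must verify that the $\BGamma$, $\BPhi$, $\BTheta$ terms and the definition $Q_\subl = \beta_\subl^{-1}I - \gamma_\subl B_\subl^\top B_\subl$ combine into exactly the left-hand side of \eqref{stepsize_condition3} after cancellations (in particular that the $B_\subl^\top B_\subl$ coefficient collapses to $\gamma_\subl(1 - \theta_\subl^{-1}(1-\phi_\subl)^2)$). Everything else is a direct application of Lemma \ref{lem:PQ} and the three conditions in \eqref{stepsize_condition}, and the $\Bu$-block already yields a strictly positive diagonal entry, so $M_2 \neq 0$ follows with no further work.
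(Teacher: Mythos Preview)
Your proposal is correct and follows essentially the same block-by-block argument as the paper: each of the three diagonal blocks of $M_2$ is checked against the corresponding condition in \eqref{stepsize_condition}, and the strict positivity of the $\Bu$-block (since $2-\phi_\subl-\theta_\subl=\varepsilon_\subl>0$) yields $M_2\neq 0$. Your treatment of the $\Bx$-block is in fact slightly more careful than the paper's, since you explicitly invoke the diagonal lower bound on $\BP$ from the proof of Lemma~\ref{lem:PQ} rather than tacitly treating $\BP$ as block-diagonal.
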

\begin{proof}
For the $(1,1)$ block of $M_2$, $\BP-\BL_{F^2}=\bdiag([P_i-L_{f_i}I_d]_{i\in\CN})\succeq0$ follows from \eqref{stepsize_condition}.
For the $(3,3)$ block of $M_2$, 
$(\BPhi^{-1})^2\BGamma^{-1}[
2I-\BPhi-\BTheta]\succ0$ holds from $\theta_\subl=2-\phi_\subl-\varepsilon_\subl$ and $\varepsilon_\subl\in(0,2-\phi_\subl),\,i\in\CN$.
Finally, consider the $(2,2)$ block of $M_2$, i.e., $\BB^\top \left(\BGamma \left(I-\BTheta^{-1}(I-\BPhi)^2\right)\right)\BB +\BQ - 3\BL_{G^2}$.
For its $(l,l)$ block, we obtain
$ \left[ \BB^\top \left(\BGamma \left(I-\BTheta^{-1}(I-\BPhi)^2\right)\right)\BB +\BQ - 3\BL_{G^2}\right]_{ll}
    =  \gamma_\subl \left(1 - \textstyle\frac{(1-\phi_\subl)^2}{\theta_\subl}\right)B_\subl^\top B_\subl + Q_\subl -3 L_{g^2_\subl} I_{q_\subl} \succeq 0 $
from \eqref{stepsize_condition}.
Therefore, $M_2\succeq 0$ and $M_2\neq0$ are satisfied.
\end{proof}
\begin{lemma}
Consider the algorithm in \eqref{alg:Full-FLiP}.
Assume that Assumption \ref{assu:convergence} is satisfied.
Then, for any $\Bx\in\BRnd$,
the following inequality holds:
\begin{align}\label{ineq:85}
\small
&F(\Bx^{k+1})-F(\Bx)
+ \langle \hat{\Bu}^{k+1},\BA\BW(\Bx^{k+1}-\Bx) \rangle \nonumber \\
\leq & \textstyle\sum_{i\in\CN} \frac{L_{f^2_i}}{2} \|x_i^{k+1}-x_i^k\|^2
- \langle \Bx^{k+1}-\Bx^k, \Bx^{k+1}-\Bx \rangle_\BP
\nonumber\\
&+ \langle \BGamma\BB(\By^{k+1}-\By^k),\BA\BW(\Bx^{k+1}-\Bx) \rangle.
\end{align}
Moreover, for any $\By\in\BRnd$,
the following inequality holds:
\begin{align}\label{ineq:86}
\small
\!\!\!\!& G(\By^{k+1})-G(\By) 
+ \langle \hat{\Bu}^{k+1}, \BB( \By^{k+1}-\By )\rangle \\
\!\!\!\!&\leq \textstyle\sum_{l\in\QGactive} \frac{L_{g^2_\subl}}{2} \|y_\subl^{k+1}-y_\subl^k\|^2 - \langle \By^{k+1}-\By^k, \By^{k+1}-\By \rangle_\BQ. \nonumber
\end{align}
\end{lemma}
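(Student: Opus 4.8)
The plan is to derive both inequalities from the first-order optimality conditions of the respective subproblems in \eqref{alg:Full-FLiP}, combined with convexity of the nonsmooth parts and the descent lemma for the smooth parts. I would start with \eqref{ineq:85}. The $\Bx$-subproblem \eqref{alg:Bx_update_FLiP} is an unconstrained minimization of a convex function, so there exists a subgradient $\xi^{k+1}\in\pd F^1(\Bx^{k+1})$ with
\begin{equation*}
\xi^{k+1} + \nabla F^2(\Bx^k) + \BW^\top\BA^\top\Bu^k + \BW^\top\BA^\top\BGamma(\BA\BW\Bx^{k+1}+\BB\By^k-\Bc) + \BP(\Bx^{k+1}-\Bx^k) = 0.
\end{equation*}
Using the definition of $\hat\Bu^{k+1}$ in \eqref{def:uhat}, the third and fourth terms combine as $\BW^\top\BA^\top\hat\Bu^{k+1}$ plus a correction $-\BW^\top\BA^\top\BGamma\BB(\By^{k+1}-\By^k)$ (since $\hat\Bu^{k+1}$ uses $\By^{k+1}$ but the subproblem used $\By^k$). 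Pairing this identity with $\Bx^{k+1}-\Bx$, convexity $F^1(\Bx)\ge F^1(\Bx^{k+1})+\langle\xi^{k+1},\Bx-\Bx^{k+1}\rangle$ handles the nonsmooth part, and the standard descent/convexity inequality for the $L_{f_i^2}$-smooth convex $f_i^2$, i.e. $f_i^2(x_i^{k+1})-f_i^2(x_i)\le\langle\nabla f_i^2(x_i^k),x_i^{k+1}-x_i\rangle+\frac{L_{f_i^2}}{2}\|x_i^{k+1}-x_i^k\|^2$, disposes of the $\nabla F^2(\Bx^k)$ term at the cost of the quadratic remainder. Collecting terms gives exactly \eqref{ineq:85}.

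For \eqref{ineq:86} I would repeat the argument on the $\By$-subproblem \eqref{alg:By_update}. Here the optimality condition reads $\zeta^{k+1}+\nabla G^2(\By^k)+\BB^\top\Bu^k+\BB^\top\BGamma(\BA\BW\Bx^{k+1}+\BB\By^{k+1}-\Bc)+\BQ(\By^{k+1}-\By^k)=0$ with $\zeta^{k+1}\in\pd G^1(\By^{k+1})$; crucially the augmented term already uses $\By^{k+1}$, so the third and fourth terms combine \emph{exactly} into $\BB^\top\hat\Bu^{k+1}$ with no correction term — which is why \eqref{ineq:86} has no analogue of the last line of \eqref{ineq:85}. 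Pairing with $\By^{k+1}-\By$, applying convexity of $G^1$ and the smoothness inequality for $G^2$ as before, and using $\BQ$-self-adjointness, yields \eqref{ineq:86}. Note that well-definedness of both subproblems (so that the minimizers and subgradients exist) follows from Assumption \ref{assu:convergence}a--b together with $\BP\succeq0$, $\BQ\succeq0$ from Lemma \ref{lem:PQ}, as already remarked after \eqref{alg:Full-FLiP}.

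The main obstacle is bookkeeping rather than conceptual: one must be careful about which iterate ($\By^k$ versus $\By^{k+1}$) enters the residual inside each subproblem, since that is precisely what produces the asymmetry between \eqref{ineq:85} and \eqref{ineq:86} — the extra inner product $\langle\BGamma\BB(\By^{k+1}-\By^k),\BA\BW(\Bx^{k+1}-\Bx)\rangle$ in \eqref{ineq:85} is the algebraic trace of the Gauss--Seidel ordering. A second point requiring care is that $\BW$ only appears through $\BA\BW$, so one should keep $\BA\BW\Bx$ intact as a unit throughout and use the identity $W_\subl\Bx=x_\CCl$ (and Proposition \ref{lem:WTu} for the transpose) only where a per-agent interpretation is needed. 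Everything else is the routine chain convexity $\Rightarrow$ subgradient inequality $\Rightarrow$ descent lemma that underlies the analogous Lemma in Chapter 8 of \cite{Ryu2022-oc}, adapted to the block-diagonal parameter matrices $\BD_\alpha,\BD_\beta,\BGamma,\BPhi$.
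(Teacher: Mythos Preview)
Your proposal is correct and follows essentially the same route as the paper: optimality condition of each subproblem, convexity of the nonsmooth part, and the descent lemma for the smooth part, with the $\By^k$ vs.\ $\By^{k+1}$ mismatch in the $\Bx$-subproblem producing the extra cross term in \eqref{ineq:85}. The only cosmetic difference is that the paper writes the optimality condition as a variational inequality ($0\le F^1(\Bx)-F^1(\Bx^{k+1})+\langle\cdots,\Bx-\Bx^{k+1}\rangle$) rather than as a subgradient equation, which is equivalent.
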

\begin{proof}
Consider the $\Bx$-subproblem in \eqref{alg:Bx_update}.
For any $\Bx$,
\begin{align}\label{ineq:stage1_1}
\small
&0\leq F^1(\Bx)-F^1(\Bx^{k+1})
+\langle \nabla F^2(\Bx) + \BW^\top \BA^\top [ \Bu^k+
\nonumber\\
&\BGamma( \BA \BW\Bx^{k+1}+\BB\By^k-\Bc)] 
+ \BP(\Bx^{k+1} -\Bx^k),
\Bx-\Bx^{k+1} \rangle
\end{align}
holds because for $\Bxkp$ in \eqref{alg:Bx_update}, it holds that $\Bxkp \in \argmin_\Bx \left\{
F^1(\Bx) + \langle \nabla J(\Bxkp), \Bx-\Bxkp \rangle
\right\}$ with $J(\Bx) = \left\langle\nabla F^2\left(\Bx^k\right)+\BW^{\top} \Bu^k, \Bx\right\rangle 
 +\frac{1}{2}\left\|\BW\Bx-\By^k\right\|^2_\BGamma+\frac{1}{2}\left\|\Bx-\Bx^k\right\|_\BP^2 $, which can be verified from the optimality condition.
 Next, by the convexity and $L_{f^2_i}$-smoothness of $f^2_i$,
 we obtain
 \begin{align}\label{ineq:state1_2}
    &\langle \nabla F^2(\Bx^k), \Bx-\Bxkp \rangle \nonumber\\
    =& \textstyle\sum_{i\in\CN} \left( \langle \nabla f^2_i(x_i^k),x_i-x_i^k \rangle 
    +\langle \nabla f^2_i(x_i^k),x_i^k-x_i^{k+1} \rangle \right)
    \nonumber \\
    =& \textstyle F^2(\Bx) -F^2(\Bxkp) +  \sum_{i\in\CN} \frac{L_{f^2_i}}{2} \|x_i^{k+1}-x_i^k \|^2.
 \end{align}
Then, adding \eqref{ineq:stage1_1} to \eqref{ineq:state1_2}, we obtain $\textstyle 0 \leq F(\Bx) - F(\Bxkp) +  \sum_{i\in\CN} \frac{L_{f_i}}{2} \|x_i^{k+1}-x_i^k \|^2
+ \langle \BW^\top \BA^\top \left[ \Bu^k+\BGamma(\BA\BW\Bx^{k+1}+\BB\By^k-\Bc)\right] 
+ \BP(\Bx^{k+1} -\Bx^k),\Bx-\Bxkp \rangle$.
Hence, \eqref{ineq:85} follows from this inequality.
The second inequality in \eqref{ineq:86} can be obtained in the same procedure as the proof of \eqref{ineq:85}.
\end{proof}
\begin{lemma}\label{lem:87}
Consider the algorithm in \eqref{alg:Full-FLiP}.
Assume that Assumption \ref{assu:convergence} is satisfied.
Then, for $\CL$ in \eqref{lagrangian} and $\hat{\Bu}^k$ in \eqref{def:uhat}, the following inequality holds:
\begin{align}\label{ineq:87}
\small
    &\CL(\Bxkp,\Bykp,\Bu^*) - \CL(\Bx^*,\By^*,\Bu^*) \nonumber \\
\leq& \textstyle\sum_{i\in\CN} \frac{L_{f^2_i}}{2}\|x_i^{k+1}-x_i^k\|^2 
+ \sum_{l\in\QGactive} \frac{L_{g^2_\subl}}{2}\|y_\subl^{k+1}-y_\subl^k\|^2
\nonumber  \\
&-2\langle \Bw^{k+1}-\Bw^k, \Bw^{k+1}-\Bw^* \rangle_{M_0} \nonumber\\
&-\langle \BPhi^{-1}(\Bu^{k+1}-\Bu^k), \BB(\By^{k+1}-\By^k) \rangle \nonumber\\
&+ \|\Bu^{k+1}-\Bu^k\|^2_{(I-\BPhi^{-1})\BPhi^{-1}\BGamma^{-1}}
\end{align}
\end{lemma}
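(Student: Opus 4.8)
The plan is to derive \eqref{ineq:87} from the two subproblem inequalities \eqref{ineq:85}--\eqref{ineq:86}, instantiated at a saddle point $(\Bx^*,\By^*,\Bu^*)$, together with two algebraic identities: the primal feasibility $\BA\BW\Bx^*+\BB\By^*=\Bc$ (from the saddle-point assumption) and the dual-update identity $\BA\BW\Bxkp+\BB\Bykp-\Bc=\BGamma^{-1}\BPhi^{-1}(\Bu^{k+1}-\Bu^k)$ (from \eqref{alg:Bu_update}), the latter combined with $\hat{\Bu}^{k+1}-\Bu^{k+1}=(\BPhi^{-1}-I)(\Bu^{k+1}-\Bu^k)$ from \eqref{def:uhat}. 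The goal is then to repackage every weighted inner product that appears into the single term $-2\langle\Bw^{k+1}-\Bw^k,\Bw^{k+1}-\Bw^*\rangle_{M_0}$.

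First I would put $\Bx=\Bx^*$ in \eqref{ineq:85} and $\By=\By^*$ in \eqref{ineq:86} and add them. By feasibility of $(\Bx^*,\By^*)$ the two $\hat{\Bu}^{k+1}$-terms merge into $\langle\hat{\Bu}^{k+1},\BA\BW\Bxkp+\BB\Bykp-\Bc\rangle$, and, again by feasibility, $F(\Bxkp)-F(\Bx^*)+G(\Bykp)-G(\By^*)+\langle\Bu^*,\BA\BW\Bxkp+\BB\Bykp-\Bc\rangle$ is exactly $\CL(\Bxkp,\Bykp,\Bu^*)-\CL(\Bx^*,\By^*,\Bu^*)$; writing $\langle\hat{\Bu}^{k+1},\cdot\rangle=\langle\Bu^*,\cdot\rangle+\langle\hat{\Bu}^{k+1}-\Bu^*,\cdot\rangle$ isolates this Lagrangian gap on the left-hand side. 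Next I would eliminate the residual $\BA\BW\Bxkp+\BB\Bykp-\Bc$ wherever it occurs using the dual-update identity: the term $\langle\hat{\Bu}^{k+1}-\Bu^*,\BA\BW\Bxkp+\BB\Bykp-\Bc\rangle$ becomes the squared-norm term $\|\Bu^{k+1}-\Bu^k\|^2_{(I-\BPhi^{-1})\BPhi^{-1}\BGamma^{-1}}$ plus a $\BGamma^{-1}\BPhi^{-1}$-weighted inner product of $\Bu^{k+1}-\Bu^k$ with $\Bu^{k+1}-\Bu^*$, and the cross term $\langle\BGamma\BB(\Bykp-\By^k),\BA\BW(\Bxkp-\Bx^*)\rangle$ inherited from \eqref{ineq:85} splits --- via $\BA\BW(\Bxkp-\Bx^*)=(\BA\BW\Bxkp+\BB\Bykp-\Bc)-\BB(\Bykp-\By^*)$ --- into the $\BB(\Bykp-\By^k)$ cross term of \eqref{ineq:87} and a $\BB^\top\BGamma\BB$-weighted inner product that combines with the $\BQ$-weighted one from \eqref{ineq:86} into $\BQ+\BB^\top\BGamma\BB$. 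Reading off $M_0=\frac{1}{2}\bdiag(\BP,\BB^\top\BGamma\BB+\BQ,\BGamma^{-1}\BPhi^{-1})$ and $\Bw=[\Bx^\top,\By^\top,\Bu^\top]^\top$, the $\BP$-, $(\BQ+\BB^\top\BGamma\BB)$-, and $\BGamma^{-1}\BPhi^{-1}$-weighted inner products assemble exactly into $-2\langle\Bw^{k+1}-\Bw^k,\Bw^{k+1}-\Bw^*\rangle_{M_0}$, and what remains --- the Lipschitz terms from \eqref{ineq:85}--\eqref{ineq:86}, the $\BB(\Bykp-\By^k)$ cross term, and the $\Bu$-increment square --- is \eqref{ineq:87}.

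I expect the main obstacle to be purely organizational rather than conceptual: the residual $\BA\BW\Bxkp+\BB\Bykp-\Bc$ must be routed consistently through both the $\hat{\Bu}^{k+1}-\Bu^*$ term and the $\BA\BW(\Bxkp-\Bx^*)$ cross term, and the signs of the three weighted inner products must be tracked with care so that they --- and precisely they --- close up into $M_0$, with the stray $\BB(\Bykp-\By^k)$ cross term and the $\|\Bu^{k+1}-\Bu^k\|^2$ term left over exactly as in \eqref{ineq:87}. No analytic ingredient is needed beyond the subproblem inequalities \eqref{ineq:85}--\eqref{ineq:86} already established and the saddle-point property of $\CL$.
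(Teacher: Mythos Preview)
Your proposal is correct and follows essentially the same route as the paper's proof: sum \eqref{ineq:85} at $\Bx=\Bx^*$ and \eqref{ineq:86} at $\By=\By^*$, use primal feasibility $\BA\BW\Bx^*+\BB\By^*=\Bc$ together with the dual-update identities $\BA\BW\Bxkp+\BB\Bykp-\Bc=\BPhi^{-1}\BGamma^{-1}(\Bu^{k+1}-\Bu^k)$ and $\hat{\Bu}^{k+1}-\Bu^k=\BPhi^{-1}(\Bu^{k+1}-\Bu^k)$ to rewrite the residual terms, and then regroup the $\BP$-, $(\BQ+\BB^\top\BGamma\BB)$-, and $\BGamma^{-1}\BPhi^{-1}$-weighted inner products into $-2\langle\Bw^{k+1}-\Bw^k,\Bw^{k+1}-\Bw^*\rangle_{M_0}$. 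The paper presents these same substitutions as a terse list of four identities, whereas you spell out the bookkeeping more explicitly, but the argument is identical.
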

\begin{proof}
From \eqref{alg:Bu_update} and \eqref{def:uhat}, we have
\begin{align*}
    &\langle \Bu^* -\hat{\Bu}^{k+1}, \BA\BW \Bxkp + \BB\Bykp -\Bc \rangle \\
    &\quad\quad\quad 
    =\langle \Bu^*-\hat{\Bu}^{k+1} , \BGamma^{-1}(\hat{\Bu}^{k+1}-\Bu^k) \rangle,\\
    & \BA\BW(\Bx^{k+1}-\Bx^*) = \BPhi^{-1}\BGamma^{-1}(\Bukp-\Bu^k) -\BB (\Bykp-\By^*),\\
    & \Bu^*-\hat{\Bu}^{k+1} = (I-\BPhi^{-1})(\Bu^{k+1}-\Bu^k)-(\Bu^{k+1}-\Bu^*),\\
    &\hat{\Bu}^{k+1}-\Bu^k = \BPhi^{-1}(\Bukp-\Bu^k).
\end{align*}
Then, substituting these equations into the sum of \eqref{ineq:85} with $\Bx=\Bx^*$ and \eqref{ineq:86} with $\By=\By^*$, we obtain \eqref{ineq:87}.
\end{proof}
\begin{lemma}\label{lem:88}
Consider the algorithm in \eqref{alg:Full-FLiP}.
Assume that Assumption \ref{assu:convergence} is satisfied.
Then, it holds that
\begin{align}\label{ineq:88}
\small
    & \langle \BPhi^{-1}(\Bukp-\Bu^k),\BB(\Bykp-\By^k) \rangle \nonumber \\
&\leq\textstyle\frac{1}{2}\|\Bykp-\By^k\|^2_{\BL_{G^2} -\BQ + \BB^\top \BTheta^{-1}(I-\BPhi)^2 \BGamma \BB}  \\
&+\textstyle\frac{1}{2}\|\By^k-\By^{k-1}\|^2_{\BL_{G^2}+\BQ} 
+ \frac{1}{2}\|\Bu^k-\Bu^{k-1}\|^2_{\BTheta\BGamma^{-1}(\BPhi^{-1})^2}.\nonumber
\end{align}
\end{lemma}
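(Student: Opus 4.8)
The plan is to obtain \eqref{ineq:88} from the first-order optimality condition of the $\By$-update, the monotonicity of $\pd G^1$, and a couple of matrix-weighted Young inequalities. First I would write the optimality condition for the $\By$-subproblem in \eqref{alg:By_update}; absorbing the augmented term into $\hat{\Bu}^{k+1}$ of \eqref{def:uhat}, it reads $-\nabla G^2(\By^k)-\BB^\top\hat{\Bu}^{k+1}-\BQ(\By^{k+1}-\By^k)\in\pd G^1(\By^{k+1})$, and the analogue one step earlier gives $-\nabla G^2(\By^{k-1})-\BB^\top\hat{\Bu}^{k}-\BQ(\By^{k}-\By^{k-1})\in\pd G^1(\By^{k})$. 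Pairing the difference of these subgradients against $\By^{k+1}-\By^k$ and invoking monotonicity of $\pd G^1$ yields
\begin{align*}
\langle\BB^\top(\hat{\Bu}^{k+1}-\hat{\Bu}^k),\,\By^{k+1}-\By^k\rangle
&\le -\langle\nabla G^2(\By^k)-\nabla G^2(\By^{k-1}),\,\By^{k+1}-\By^k\rangle \\
&\quad -\|\By^{k+1}-\By^k\|_{\BQ}^2+\langle\BQ(\By^k-\By^{k-1}),\,\By^{k+1}-\By^k\rangle .
\end{align*}

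Next I would reduce the left-hand side of \eqref{ineq:88} to this term plus a small remainder. From \eqref{alg:Bu_update} and \eqref{def:uhat} one has $\hat{\Bu}^{k+1}-\Bu^k=\BPhi^{-1}(\Bu^{k+1}-\Bu^k)$ (and likewise for the previous index), hence the algebraic identity $\BPhi^{-1}(\Bu^{k+1}-\Bu^k)=(\hat{\Bu}^{k+1}-\hat{\Bu}^k)+(\BPhi^{-1}-I)(\Bu^k-\Bu^{k-1})$. Substituting this into $\langle\BPhi^{-1}(\Bu^{k+1}-\Bu^k),\,\BB(\By^{k+1}-\By^k)\rangle$ and applying the displayed inequality leaves, besides the $\nabla G^2$- and $\BQ$-terms, only the cross term $\langle\BPhi^{-1}(\Bu^k-\Bu^{k-1}),\,(I-\BPhi)\BB(\By^{k+1}-\By^k)\rangle$.

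It then remains to estimate three cross terms. For $-\langle\nabla G^2(\By^k)-\nabla G^2(\By^{k-1}),\,\By^{k+1}-\By^k\rangle$ I would use Cauchy--Schwarz blockwise together with the $L_{g_\subl^2}$-Lipschitz bounds of Assumption \ref{assu:convergence}b and Young's inequality, giving $\tfrac12\|\By^{k+1}-\By^k\|_{\BL_{G^2}}^2+\tfrac12\|\By^k-\By^{k-1}\|_{\BL_{G^2}}^2$. For $-\|\By^{k+1}-\By^k\|_{\BQ}^2+\langle\BQ(\By^k-\By^{k-1}),\,\By^{k+1}-\By^k\rangle$, completing the square (here $\BQ\succeq0$, which holds under \eqref{stepsize_condition2}, cf.\ Lemma \ref{lem:PQ}, is used) gives $-\tfrac12\|\By^{k+1}-\By^k\|_{\BQ}^2+\tfrac12\|\By^k-\By^{k-1}\|_{\BQ}^2$. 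For the last cross term I would apply Young's inequality with the positive definite weight $\BTheta\BGamma^{-1}$ (with $\BTheta\succ0$ as in the proof of Theorem \ref{thm:convergence}), splitting it so that $(\BPhi^{-1})^2$ stays on the $\Bu^k-\Bu^{k-1}$ side and $(I-\BPhi)^2$ goes onto the $\BB(\By^{k+1}-\By^k)$ side; since $\BB$, $\BGamma$, $\BPhi$, $\BTheta$, $\BQ$, $\BL_{G^2}$ are all block-diagonal in the clique index and commute, this produces exactly $\tfrac12\|\Bu^k-\Bu^{k-1}\|_{\BTheta\BGamma^{-1}(\BPhi^{-1})^2}^2+\tfrac12\|\By^{k+1}-\By^k\|_{\BB^\top\BTheta^{-1}(I-\BPhi)^2\BGamma\BB}^2$. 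Summing the three estimates yields \eqref{ineq:88}.

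The step I expect to be the main obstacle is purely bookkeeping: choosing the Young-inequality weight in the last estimate (and carrying out the commutations) so that the matrix coefficients land precisely on the stated ones — the coefficient of $\|\By^{k+1}-\By^k\|^2$ in \eqref{ineq:88}, namely $\BL_{G^2}-\BQ+\BB^\top\BTheta^{-1}(I-\BPhi)^2\BGamma\BB$, pools contributions from all three estimates, so each factor of $\BB$, $\BGamma$, $\BPhi$, $\BTheta$ must be routed to the correct side for the identity to close.
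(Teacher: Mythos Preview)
Your proof is correct and follows essentially the same route as the paper's: bound $\langle \hat{\Bu}^{k+1}-\hat{\Bu}^k,\BB(\By^{k+1}-\By^k)\rangle$ from the optimality of the $\By$-subproblem, convert to $\BPhi^{-1}(\Bu^{k+1}-\Bu^k)$ via the identity $\BPhi^{-1}(\Bu^{k+1}-\Bu^k)=(\hat{\Bu}^{k+1}-\hat{\Bu}^k)+(\BPhi^{-1}-I)(\Bu^k-\Bu^{k-1})$, and then apply the two Young inequalities (with weights $\BQ$ and $\BTheta\BGamma^{-1}$) exactly as the paper does to reach \eqref{ineq:stage2-2} and then \eqref{ineq:88}. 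The only cosmetic difference is in the first step: the paper obtains the intermediate bound by adding two instances of \eqref{ineq:86} (one at iteration $k$ with $\By=\By^{k+1}$, one at iteration $k+1$ with $\By=\By^k$), which already carries the $\BL_{G^2}$-terms, whereas you subtract the first-order conditions at consecutive steps, invoke monotonicity of $\partial G^1$, and then estimate the $\nabla G^2$-difference separately via Lipschitzness and Young; the resulting inequality is identical.
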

\begin{proof}
For \eqref{ineq:87}, by replacing $(k+1,k)$ with $(k,k-1)$ and setting $\By=\Bykp$, we obtain $G(\By^k)-G(\Bykp) + \langle \hat{\Bu}^k,\BB(\By^k-\Bykp) \rangle
\leq \textstyle\sum_{l\in\QGactive}\frac{L_{g^2_\subl}}{2} \|y_\subl^k-y_{\subl,k-1}\|^2
- \|\By^{k+1}-\By^{k}\|^2_\BQ.$
Then, adding this to \eqref{ineq:86} with $\By=\By^k$ gives
\begin{align}\label{ineq:stage2-2}
\small& \langle \BPhi^{-1}(\Bukp-\Bu^k), \BB(\Bykp-\By^k) \rangle \nonumber \\
\leq&  \textstyle\sum_{l\in\QGactive}
( \frac{L_{g^2_\subl}}{2} \|y_\subl^{k+1}-y_\subl^k\|^2 +\frac{L_{g^2_\subl}}{2} \|y_\subl^k-y_{\subl,k-1}\|^2 ) \nonumber \\
-&\|\By^{k+1}-\By^{k}\|^2_\BQ
    + \langle \By^{k+1}-\By^k,\By^k-\By^{k-1} \rangle_\BQ \nonumber \\
-& \left\langle (I-\BPhi^{-1})(\Bu^k-\Bu^{k-1}), \BB(\Bykp-\By^k) \right\rangle.
\end{align}
Now, by applying \textit{Young's inequality}
\begin{equation*}
    \langle a,b \rangle \leq \frac{\epsilon}{2} \|a\|^2 + \frac{1}{2\epsilon}\|b\|^2,\quad \forall a,b\in\BR^m,\,\forall \epsilon>0
\end{equation*}
to $\langle \By^{k+1}-\By^k,\By^k-\By^{k-1} \rangle_\BQ=\langle \BQ^{\frac{1}{2}}(\By^{k+1}-\By^k),\BQ^{\frac{1}{2}}(\By^k-\By^{k-1}) \rangle$ with $\epsilon=1$ and $\langle (I-\BPhi^{-1})(\Bu^k-\Bu^{k-1}), \BB(\Bykp-\By^k) \rangle
=\textstyle\sum_{l\in\QGactive} 
\langle \frac{1}{\sqrt{\gamma_\subl} {\phi^{\subl}}}
(u_\subl^k-u_{\subl,k-1}), 
\sqrt{\gamma_\subl}(\phi_\subl-1)\left( B_\subl ( y_\subl^{k+1}-y_\subl^k)\right)\rangle$
with $\epsilon=\sqrt{\theta_\subl}>0$, the inequality in \eqref{ineq:stage2-2} is reduced to
\begin{align*} 
& \langle \BPhi^{-1}(\Bukp-\Bu^k),\BB(\Bykp-\By^k) \rangle \nonumber \\
\leq&  \textstyle\sum_{l\in\QGactive}
\left( \frac{L_{g^2_\subl}}{2} \|y_\subl^{k+1}-y_\subl^k\|^2 +\frac{L_{g^2_\subl}}{2} \|y_\subl^k-y_{\subl,k-1}\|^2 \right) \nonumber \\
&-\|\By^{k+1}-\By^k\|^2_\BQ 
+ \frac{1}{2} \|\Bykp-\By^k\|_\BQ^2 + \frac{1}{2} \|\By^k-\By^{k-1}\|_\BQ \nonumber \\
&+\textstyle\frac{1}{2} \|(\BTheta\BGamma)^{\frac{1}{2}}\BPhi^{-1}(\Bu^k-\Bu^{k-1})\|^2 \nonumber \\
&+ \textstyle\frac{1}{2} \|(\BTheta^{-1}\BGamma)^{\frac{1}{2}}(I-\BPhi)\BB(\Bykp-\By^k)\|^2.
\end{align*}
Hence, reorganizing this inequality, we obtain \eqref{ineq:88}.
\end{proof}
\bibliographystyle{IEEEtran}
\bibliography{bibliography.bib}
\end{document}